\def\@cite#1#2{{\m@th\upshape\bfseries%
[{#1\if@tempswa{\m@th\upshape\mdseries, #2}\fi}]}}
\newtheorem{theorem}{Theorem}[section]
\newtheorem{lemma}[theorem]{Lemma}
\newtheorem{corollary}[theorem]{Corollary}
\newtheorem{proposition}[theorem]{Proposition}
\theoremstyle{definition}
\newtheorem{remark}[theorem]{Remark}
\numberwithin{equation}{section}
\renewcommand{\phi}{\varphi}
\newcommand{\upchi}{{\raise.35ex\hbox{\ensuremath{\chi}}}}
\begin{document}
%%%%%%%%%%%%%%%%%%%%%%%%%%%%%%%%%%%%%%%%%%%

\title{Subalgebras of $C(\Omega,M_n)$ and their modules}
\author{Jean Roydor}
\address{Departement de Math\'{e}matiques\\
Universit\'{e} de Franche-Comt\'{e}\\
25030 Besan\c{c}on cedex\\
France}

\email{jean.roydor@math.univ-fcomte.fr} \subjclass{47L30,47L25}

\maketitle
\begin{abstract} We give an operator space characterization of
subalgebras of $C(\Omega,M_n)$. We also describe injective
subspaces of $C(\Omega,M_n)$ and then give applications to
sub-TROs of $C(\Omega,M_n)$. Finally, we prove an `$n$-minimal
version' of the Christensen-Effros-Sinclair representation
theorem.
\end{abstract}

%\date{}
\maketitle

%%%%%%%%%%%%%%%%%%%%%%%%%%%%%%%%%%%%%%%%%%%
\section{Introduction and preliminaries}
Let $n \in \mathbb{N}^*$. An operator space $X$ is called
$n$-$minimal$ if there exists a compact Hausdorf space $\Omega$
and a completely isometric map $i:X \to C(\Omega,M_n)$. The
readers are referred to \cite{Pi1} and \cite{ER} for details on
operator space theory. Recall that the $C^*$-algebra
$C(\Omega,M_n)$ can be identified $*$-isomor\-phi\-cally with
$C(\Omega) \otimes_{min} M_n$ or $M_n(C(\Omega))$ (see
\cite[Proposition 12.5]{P} for details). Obviously, in the case
$n=1$, we just deal with the well-known class of minimal operator
spaces. Smith noticed that any linear map into $M_n$ is completely
bounded and its cb norm is achieved at the $n^{th}$ amplification
i.e. $\Vert u \Vert_{cb}=\Vert id_{M_n} \otimes u \Vert$ (see
\cite[Proposition 8.11]{P}). Clearly, this property remains true
for maps into $C(\Omega,M_n)$. In fact, Pisier showed that this
property characterized $n$-minimal operator spaces. More
precisely, if $X$ is an operator space such that any linear map
$u$ into $X$ is necessarily completely bounded and $\Vert u
\Vert_{cb}=\Vert id_{M_n} \otimes u \Vert$, then $X$
is $n$-minimal (see \cite[Theorem 18]{Pi}).  \\
\indent We now recall a few facts about injectivity (see
\cite{ER}, \cite{P} or \cite{BLM1} for details). A Banach space
$X$ is \textit{injective} if for any Banach spaces $Y \subset Z$,
each contractive map $u:Y \to X$ has a contractive extension
$\tilde{u}:Z \to X$. Since the 50's, it is known that a Banach
space is injective if and only if it is isometric to a
$C(K)$-space with $K$ a Stonean space and dual injective Banach
spaces are exactly $L^\infty$-spaces (see \cite{EOR} for more
details). More recently, injectivity has also been studied in
operator spaces category. Analogously, an operator space $X$ is
said to be \textit{injective} if for any operator spaces $Y
\subset Z$, each completely contractive map $u:Y \to X$ has a
completely contractive extension $\tilde{u}:Z \to X$. Note that a
Banach space is injective if and only if it is injective as a
minimal operator space. Let $X$ be an operator space, $(Y,i)$ is
an \textit{injective envelope of $X$} if $Y$ is an injective
operator space, $i:X \to Y$ is a complete isometry and for any
injective operator space $Z$ with $i(X) \subset Z \subset Y$, then
$Z=Y$. Sometimes, we may forget the completely isometric
embedding. In fact, any operator space admits a unique injective
envelope (up to complete isometry) and we write $I(X)$ the
injective envelope of
$X$. See \cite[Chapter 6]{ER} for a proof of this construction.\\
\indent Obviously, an $\ell^\infty$-direct sum of $n$-minimal
operator spaces is again $n$-minimal. In the next proposition, we
give some other easy properties of $n$-minimal operator spaces :

\begin{proposition}\label{1.1} Let $X$ be an $n$-minimal operator
space.  \begin{enumerate}[i)] \item Then its bidual $X^{**}$ and
its injective envelope $I(X)$ are $n$-minimal too. \item If
moreover, $X$ is a dual operator space, then there is a set $I$
and a $w^*$-continuous complete isometry $i:X \to
\ell_I^\infty(M_n)$.
\end{enumerate}
\end{proposition}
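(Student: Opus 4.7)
For part (i), the plan is to push any completely isometric embedding $j: X \hookrightarrow C(\Omega, M_n)$ through the two constructions in question. Taking biduals, $X^{**} \hookrightarrow C(\Omega, M_n)^{**}$ is completely isometric; since $M_n$ is finite-dimensional, the bidual identifies with $C(\Omega)^{**} \otimes M_n$, and $C(\Omega)^{**}$ is a unital commutative $\ca$-algebra, hence $C(\Omega_0)$ for some compact Hausdorff $\Omega_0$, yielding $X^{**}\hookrightarrow C(\Omega_0, M_n)$.

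For $I(X)$ I would observe that $C(\Omega, M_n) \hookrightarrow \ell^\infty_\Omega(M_n) = C(\beta\Omega_d, M_n)$ (where $\Omega_d$ denotes $\Omega$ with the discrete topology), and that $\ell^\infty_I(M_n)$ is an injective operator space for any $I$: any completely contractive map into it is a tuple of cc maps into $M_n$, each of which extends by Arveson--Wittstock since $M_n$ is injective. Since $I(X)$ embeds completely isometrically into any injective operator space containing $X$, we obtain $I(X) \hookrightarrow \ell^\infty_\Omega(M_n)$, an $n$-minimal space.

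For part (ii), let $Y$ denote the operator space predual of $X$. The plan is to parametrise the $w^*$-continuous complete contractions $X \to M_n$ by the closed unit ball $\F$ of $M_n(Y)$: each $[y_{kl}] \in \F$ gives $\phi_{[y_{kl}]}: X \to M_n$, $x \mapsto [\langle x, y_{kl}\rangle]$, and these are exactly the $w^*$-continuous complete contractions. Bundle them into $i: X \to \ell^\infty_\F(M_n)$, $i(x) = (\phi_{[y_{kl}]}(x))_{[y_{kl}] \in \F}$; this is tautologically $w^*$-continuous and completely contractive.

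The main obstacle is showing that $i$ is a complete isometry. Via the identification $M_m(X) = CB(Y, M_m)$, a matrix $[x_{ij}] \in M_m(X)$ corresponds to a cb map $v: Y \to M_m$, and a direct computation shows $\|[x_{ij}]\|_{M_m(X)} = \|v\|_{cb}$ while $\|i_m([x_{ij}])\| = \|v_n\|$, so the task reduces to proving that $\|v\|_{cb} = \|v_n\|$ for every bounded $v: Y \to M_m$. To obtain this I would pass to the adjoint $v^*: M_m^* \to X$: operator-space duality gives $\|v\|_{cb} = \|v^*\|_{cb}$; Pisier's characterization, which applies since $X$ is $n$-minimal, gives $\|v^*\|_{cb} = \|(v^*)_n\|$; and under the canonical identifications $M_n(M_m^*) = M_n(M_m)^*$ and $M_n(X) = M_n(Y)^*$, the map $(v^*)_n$ is the Banach-space adjoint of $v_n$ and therefore has the same operator norm. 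Combining these equalities gives what is needed.
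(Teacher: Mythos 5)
Your proof follows essentially the same route as the paper's: for (i) you exhibit an $n$-minimal injective superspace of $X$ and invoke rigidity of the injective envelope (the paper uses $C(\Omega',M_n)$ with $\Omega'=$ the Stonean space of $I(C(\Omega))$ where you use $\ell^\infty_\Omega(M_n)$ --- an immaterial difference), and for (ii) you reproduce the paper's construction: index by the unit ball of $M_n(Y)$, reduce to $\Vert v\Vert_{cb}=\Vert v_n\Vert$ for $v:Y\to M_m$, and derive this from $\Vert v\Vert_{cb}=\Vert v^*\Vert_{cb}$, Smith's lemma applied to the map $v^*$ into the $n$-minimal space $X$, and the level-by-level equality $\Vert v_n\Vert=\Vert (v^*)_n\Vert$.

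One caveat on that last equality: your justification does not hold as stated. The identification $M_n(M_m^*)=M_n(M_m)^*$ is false isometrically --- the correct statement is $M_n(V^*)=CB(V,M_n)$ with the cb-norm, and already for $V=\mathbb{C}$ this gives $M_n$ versus $S^1_n$ --- so $(v^*)_n$ is not the Banach-space adjoint of $v_n$ between those spaces. The equality $\Vert v_l\Vert=\Vert(v^*)_l\Vert$ is nonetheless true (the paper asserts it without proof): one gets $\Vert(v^*)_l\Vert\leq\Vert v_l\Vert$ by viewing $(v^*)_l$ as $\varphi\mapsto\varphi\circ v$ on $CB(M_m,M_l)=M_l(M_m^*)$ and applying Smith's lemma to $\varphi\circ v$, and the reverse inequality from the matrix pairing of $M_l(Y)$ with $M_l(Y^*)=M_l(X)$ together with the fact that $Y\to Y^{**}$ is a complete isometry. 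With that repair the argument is complete.
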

\begin{proof} The first assertion of $i)$ follows from
$C(\Omega,M_n)^{**}=M_n(C(\Omega))^{**}=M_n(C(\Omega)^{**})$
$*$-isomorphically. For the second, suppose $X \subset
C(\Omega,M_n)$ completely isometrically. From the description of
injective Banach spaces, $I(C(\Omega))=C(\Omega ')$ with $\Omega
'$ Stonean. Then $X \subset C(\Omega ',M_n)$ and this last
$C^*$-algebra is injective, so $I(X) \subset C(\Omega ',M_n)$
completely isometrically.\\
Suppose that $W$ is an operator space predual of $X$. Then
$X=CB(W,\mathbb{C})$ and if $I=\cup_n Ball(M_n(W))$, we have a
$w^*$-continuous complete isometry $\psi:X \longrightarrow
\oplus^\infty_{w \in I} M_{n_w}$ (where $n_w=m$ if $w \in M_m(W)$)
defined by $\psi(x)=( [ x(w_{ij}) ] )_{w \in I}$. Let $x \in
M_k(X)=CB(W,M_k)$. As $X$ is $n$-minimal, by \cite[Proposition
8.11]{P}, $\Vert x^* \Vert_{cb}=\Vert id_{M_n} \otimes x^* \Vert$,
where $x^*:M_k^* \to X$ denotes the adjoint map. However, for any
$l$, $\Vert id_{M_l} \otimes x\Vert = \Vert id_{M_l} \otimes x^*
\Vert$. Hence, $\Vert x \Vert_{cb}=\Vert id_{M_n} \otimes x \Vert$
and so, in the definition of $\psi$, we can majorize the $n_w$'s
by $n$ and obtain a complete isometry.
\end{proof}

We reviewed that an injective minimal operator space is a
$C^*$-algebra, but this property is lost for $n$-minimal operator
spaces (as soon as $n \geq 2$). Generally, an injective operator
space only admits a structure of ternary ring of operators. We
recall that a closed subspace $X$ of a $C^*$-algebra is a
\textit{ternary ring of operators} (TRO in short) if $X X^\star X
\subset X$, here $X^\star$ denotes the adjoint space of $X$. And a
$W^*$-\textit{TRO} is $w^*$-closed subspace of a von Neumann
algebra stable under the preceding `triple product'. TROs and
$W^*$-TROs can be regarded as generalization of $C^*$-algebras and
$W^*$-algebras. For instance, The Kaplansky density Theorem and
the Sakai Theorem remain valid for TROs (see e.g. \cite{EOR}). A
$triple~ morphism$ between TROs is a linear map which preserves
their `triple products'. This category enjoys some `rigidity
properties' like $C^*$-algebras category (see e.g. \cite{EOR} or
\cite[Section 8.3]{BLM1} for
details).\\
\indent So far we have seen that certain properties of the minimal
case `pass' to the $n$-minimal situation. Therefore, the basic
idea of this paper is to extend valid results in the commutative
case to the more general $n$-minimal case.
\\\\
\indent A first commutative result that can be extended to the
$n$-minimal case is a theorem on operator algebras due to Blecher.
We recall that an \textit{operator algebra} is a closed subalgebra
of $B(H)$, see \cite{BLM1} or \cite{P} for some backgrounds and
developments. And an operator algebra is said to be
\textit{approximately unital} if it possesses a contractive
approximate identity. In \cite{B}, Blecher showed that an
approximately unital operator algebra which is minimal is in fact
a uniform algebra (i.e a subalgebra of a commutative
$C^*$-algebra). So here, let $A$ be an approximately unital
operator algebra and assume that $A$ is $n$-minimal. Then we can
obtain a completely isometric homomorphism from $A$ into a certain
$C(\Omega,M_n)$ (see Corollary \ref{2.4}). Of course, we can ask
this type of question in various categories of operator spaces.
More precisely, let $\mathcal{C}$ denote a certain subcategory of
the category of operator spaces with completely contractive maps.
Let $X$ be an object of $\mathcal{C}$ which is $n$-minimal (as an
operator space), can we obtain a completely isometric morphism of
$\mathcal{C}$ from $X$ into a $C^*$-algebra of the form
$C(\Omega,M_n)$ ? For example in Proposition \ref{1.1}, we
answered this question in the category of dual operator spaces and
$w^*$-continuous completely contractive maps. We will also give
a positive answer in the category of : \\
- $C^*$-algebras and $*$-homomorphisms (see Theorem \ref{2.2}) ;\\
- von Neumann algebras and $w^*$-continuous $*$-homomorphisms (see Remark \ref{2.5}) ;\\
 - approximately unital operator algebras and completely
contractive
homomorphisms (see Corollary \ref{2.4}) ;\\
- operator systems and completely positive unital maps (see Corollary \ref{3.3}) ;\\
- TRO and triple morphisms (see Proposition \ref{4.1}) ;\\
- $W^*$-TRO and $w^*$-continuous triple morphisms (see Corollary \ref{4.5}).\\
It means that, in any of the previous categories, the $n$-minimal
operator space structure encodes the additional structure. Since
the injective envelope of an $n$-minimal operator space is
$n$-minimal too (see Proposition \ref{1.1}), passing to the
injective envelope will be a useful technique to answer these
preceding questions. In any case, the description of $n$-minimal
injective operator spaces (established in Theorem \ref{3.5})
will be of major importance.\\
\\
\indent The Christensen-Effros-Sinclair theorem (CES-theorem in
short) is a second example of theorem that could be treated in the
$n$-minimal case. Let $A$ be an operator algebra (or more
generally a Banach algebra endowed with an operator space
structure) and let $X$ be an operator space which is a left
$A$-module. Then following \cite[Chapter 3]{BLM1}, we say that $X$
is a left $h$-$module~over~A$ if the action of $A$ on $X$ induces
a completely contractive map from $ A \otimes_h X$ in $X$ (where
$\otimes_h$ denotes the Haagerup tensor product). The CES-theorem
states that if $X$ is a non-degenerate h-module over an
approximately unital operator algebra $A$ (i.e. $AX$ is dense in
$X$), then there exists a $C^*$-algebra $C$, a complete isometry
$i:X \to C$ and a completely contractive homomorphism $\pi:A \to
C$ such that $i(a \cdot x)=\pi(a)i(x)$ for any $a \in A,~x \in X$.
We will prove that if $X$ is $n$-minimal, we can choose $C$ to be
$n$-minimal too. This leads to an `$n$-minimal version' of the
CES-theorem. The case $n=1$ has been treated (see \cite{BLM2}) in
a Banach space framework ; here we will use an operator space
approach based on the multiplier algebra of an operator space.
\section{Subalgebras of $C(\Omega,M_n)$}
Recall that a $C^*$-algebra is \textit{subhomogeneous of degree}
$\leq n$ if it is contained $*$-isomorphically in a $C^*$-algebra
of the form $C(\Omega,M_n)$, where $\Omega$ is compact Hausdorf
space. Hence $n$-minimality could be seen as an operator space
analog of subhomogeneity of degree $\leq n$. We also recall the
well-known characterization of subhomogeneous $C^*$-algebras in
terms of representations. Indeed, a $C^{\ast}$-algebra $A$ is
subhomogeneous of degree $\leq n$ if and only if every irreducible
representation of $A$ has dimension no greater than $n$. The `if
part' is easily obtained taking a separating family of irreducible
representations. Conversely, if $A$ is contained
$*$-isomorphically in $C(\Omega,M_n)$, then every irreducible
representation of $A$ extends to one on $C(\Omega,M_n)$ (because
irreducible representations correspond to pure states). And as any
irreducible representation of $C(\Omega,M_n)$ has dimension no
greater than $n$, we can conclude (the author thanks Roger Smith
for these explanations).

 \begin{lemma}\label{2.1} Let $k \in \mathbb{N}^*$, $\Omega$ a compact Hausdorf space and $t_k$ the transpose mapping
 $$\begin{array}{ccll}
               t_k ~:&  C(\Omega,M_k)&\to &C(\Omega,M_k),\\
               &[f_{ij}]  & \mapsto & [f_{ji}]
               \end{array}$$ Then for any $l \in \mathbb{N}^*$, $\Vert id_{M_l} \otimes t_k \Vert =inf ( k,l )$.
               Thus $t_k$ is completely bounded and $\Vert id_{M_k} \otimes t_k
\Vert=\Vert t_k \Vert_{cb}=k$.
\end{lemma}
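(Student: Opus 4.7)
The plan is to reduce the statement to the classical computation of the norm of the partial transpose on $M_l\otimes M_k$, and then to carry out that computation with explicit test elements.

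First, I would identify $M_l(C(\Omega,M_k))$ with $C(\Omega,M_l\otimes M_k)=C(\Omega,M_{lk})$ and observe that $id_{M_l}\otimes t_k$ acts pointwise: for $f\in M_l(C(\Omega,M_k))$ and $\omega\in\Omega$, $((id_{M_l}\otimes t_k)f)(\omega)$ is obtained by applying the partial transpose $id_{M_l}\otimes t_k$ on $M_l\otimes M_k$ to $f(\omega)$. Hence
\[
\Vert id_{M_l}\otimes t_k\Vert_{M_l(C(\Omega,M_k))\to M_l(C(\Omega,M_k))}
=\Vert id_{M_l}\otimes t_k\Vert_{M_l\otimes M_k\to M_l\otimes M_k},
\]
and the problem collapses to the case when $\Omega$ is a point. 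I note in passing that this is the place where it is essential that the transpose is applied to the $M_k$-factor and not to the $C(\Omega)$-factor.

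Next I would prove the upper bound $\Vert id_{M_l}\otimes t_k\Vert\leq \inf(k,l)$ on $M_l\otimes M_k$. Since $t_k$ is a map \emph{into} $M_k$, Smith's lemma (cited in the introduction) gives $\Vert t_k\Vert_{cb}=\Vert id_{M_k}\otimes t_k\Vert$, hence $\Vert id_{M_l}\otimes t_k\Vert\leq \Vert t_k\Vert_{cb}\leq k$ for every $l$ (once the ``$\leq k$'' half is established at $l=k$ by the lower bound computation below). For the bound by $l$, I use the fact that the full transpose $t_l\otimes t_k$ on $M_l\otimes M_k\cong M_{lk}$ is an isometry; writing
\[
id_{M_l}\otimes t_k=(t_l^{-1}\otimes id_{M_k})\circ(t_l\otimes t_k)
\]
yields $\Vert id_{M_l}\otimes t_k\Vert=\Vert t_l\otimes id_{M_k}\Vert=\Vert id_{M_k}\otimes t_l\Vert$, so the two quantities are symmetric in $(k,l)$ and the bound by $\inf(k,l)$ follows by bootstrapping on the smaller of the two indices.

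For the lower bound, I would exhibit a test element. Set $r=\inf(k,l)$ and consider
\[
X=\sum_{i,j=1}^{r}E_{ij}\otimes e_{ji}\in M_l\otimes M_k,
\]
where $E_{ij}$ and $e_{ij}$ denote the matrix units of $M_l$ and $M_k$ respectively. A direct computation shows $X$ acts as a partial flip on $\bC^l\otimes\bC^k$, so $\Vert X\Vert=1$. On the other hand, $(id_{M_l}\otimes t_k)(X)=\sum_{i,j=1}^{r}E_{ij}\otimes e_{ij}$ is a rank-one operator whose action on the unit vector $r^{-1/2}\sum_{a=1}^r e_a\otimes e_a$ has norm $r$, whence $\Vert(id_{M_l}\otimes t_k)(X)\Vert=r$. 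Combined with the upper bound, this gives $\Vert id_{M_l}\otimes t_k\Vert=\inf(k,l)$, and taking the supremum over $l$ yields $\Vert t_k\Vert_{cb}=k$, attained at $l=k$.

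I do not expect a serious obstacle here; the main care point is keeping the $M_l$- and $M_k$-factors straight and exploiting the symmetry $\Vert id_{M_l}\otimes t_k\Vert=\Vert id_{M_k}\otimes t_l\Vert$ coming from isometry of the full transpose, which is what cleanly gives the bound $\inf(k,l)$ rather than just $\min(\Vert t_k\Vert_{cb},\Vert t_l\Vert_{cb})$.
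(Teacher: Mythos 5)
Your reduction to a single point of $\Omega$, your test element for the lower bound, and the symmetry $\Vert id_{M_l}\otimes t_k\Vert=\Vert id_{M_k}\otimes t_l\Vert$ extracted from the isometry of the full transpose on $M_{lk}$ are all correct, and that symmetry is a genuinely cleaner route to the bound $\Vert id_{M_l}\otimes t_k\Vert\leq l$ than the argument in the paper, which decomposes $id_{M_l}\otimes t_k(x)$ into its $l$ generalized block diagonals by means of a cyclic permutation unitary and the diagonal truncation $D_l$. There is, however, a gap at the one point on which your whole upper bound rests: you never prove $\Vert id_{M_k}\otimes t_k\Vert\leq k$. Smith's lemma together with your symmetry only reduces the general case to the diagonal one, namely $\Vert id_{M_l}\otimes t_k\Vert\leq\min\bigl(\Vert id_{M_k}\otimes t_k\Vert,\Vert id_{M_l}\otimes t_l\Vert\bigr)$, and your parenthetical claim that the ``$\leq k$'' half is ``established at $l=k$ by the lower bound computation below'' cannot be right: the element $X$ only yields $\Vert id_{M_k}\otimes t_k\Vert\geq k$, and no lower bound computation can deliver an upper bound. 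As written, the inequality $\Vert t_k\Vert_{cb}\leq k$ is asserted, not proved.

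The missing step is standard and easily supplied. For any linear map $u$ into a $C^*$-algebra and any $[x_{ij}]\in M_r$ of the domain with norm at most $1$, one has $\Vert [u(x_{ij})]\Vert\leq\bigl(\sum_{i,j}\Vert u(x_{ij})\Vert^2\bigr)^{1/2}\leq r\Vert u\Vert$, so $\Vert id_{M_r}\otimes u\Vert\leq r\Vert u\Vert$; combined with Smith's lemma this gives $\Vert u\Vert_{cb}\leq r\Vert u\Vert$ for maps into $M_r$, and since $t_r$ is an isometry, $\Vert t_r\Vert_{cb}\leq r$. Alternatively you may simply invoke \cite[Proposition 2.2.7]{ER}, which is exactly what the paper does for this half. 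With that single line added your proof closes, and it is arguably tidier than the paper's: one external ingredient ($\Vert t_r\Vert_{cb}\leq r$), Smith's lemma, the transpose symmetry and one test element replace the permutation-matrix computation entirely.
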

\begin{proof} The equality $\Vert t_k \Vert _{cb} = k$ is obtained in adapting
 the proof of \cite[Proposition 2.2.7]{ER}. Hence in the case $k \leq l$, by \cite[Proposition 8.11]{P})
 we obtain $\Vert id_{M_l} \otimes t_k \Vert =inf ( k,l )$. Next we prove $\Vert id_{M_l} \otimes t_k \Vert \leq l$.  let $\pi$ be the cyclical
permutation matrix $$\pi= \left(
\begin{array}{ccccc}
 0&0&\cdots & 0&I_k \\
 I_k&0 & \cdots&0&0\\
 \vdots&&\ddots \quad &&\vdots\\
 0&0&\cdots&I_k&0\\
 \end{array} \right) \in M_l(C(\Omega,M_k)).$$ Let $D_l:M_l(C(\Omega,M_k)) \to M_l(C(\Omega,M_k))$ be the
 diagonal truncation of $M_l$ i.e. $D_l(\epsilon_{ij} \otimes
 y)=\delta_{ij} \epsilon_{ij} \otimes
 y$ where $\epsilon_{ij}$ ($i,j \leq l$) denotes the matrix units
 of $M_l$ and $y \in C(\Omega,M_k)$. Let $x=[x_{ij}]_{i,j \leq
l} \in M_l(C(\Omega,M_k))$ and for simplicity of notation, we
wrote $t(x)=id_{M_l} \otimes
 t_k(x)\in M_l(C(\Omega,M_k))$. Then
 $t(x)=\sum_{i=0}^{l-1} D_l(t(x)\pi ^i)\pi ^{-i}$, and so
 $\Vert t(x) \Vert \leq \sum_{i=0}^{l-1} \Vert D_l(t(x)\pi ^i)\Vert $ (because $\pi$ is unitary).
 To conclude it suffices to majorize each terms of the previous sum by
 the norm of $x$.
However, for any $i$, $D_l(t(x)\pi ^i)$ is of the form
$\sum_{j=1}^{l} \epsilon_{jj} \otimes t_k(x_{p_jq_j})$ and we can
majorize its norm, $$\Vert \sum_{j=1}^{l} \epsilon_{jj} \otimes
t_k(x_{p_jq_j}) \Vert^2=\Vert \sum_{j=1}^{l} \epsilon_{jj} \otimes
t_k(x_{p_jq_j}x_{p_jq_j}^*) \Vert = max_j \{ \Vert
t_k(x_{p_jq_j}x_{p_jq_j}^*) \Vert \}$$ but
$x_{p_jq_j}x_{p_jq_j}^*$ is a selfadjoint element of
$C(\Omega,M_k)$, so its norm is unchanged by $t_k$ and $\Vert
t_k(x_{p_jq_j}x_{p_jq_j}^*) \Vert = \Vert x_{p_jq_j} \Vert ^2 \leq
\Vert x \Vert ^2$. Finally, for any $i$, $\Vert D_l(t(x)\pi ^i)
\Vert \leq \Vert x \Vert$ which enable us to conclude.\\
Moreover in adapting \cite[Proposition 2.2.7]{ER}, we have easily
$\Vert id_{M_l} \otimes t_k \Vert =l$, if $l \leq k$.  %Here, let $x \in C(\Omega,M_k)$
%and $$\pi= \left(
%\begin{array}{ccccc}
% 0&0&\cdots & 0&1 \\
 %1&0 & \cdots&0&0\\
 %\vdots&&\ddots \quad &&\vdots\\
 %0&0&\cdots&1&0\\
 %\end{array} \right) \in M_k(C(\Omega))$$ then $x=\sum_{i=0}^{k-1} D_k(x\pi ^i)\pi ^{-i}$ and
 %$t_k(x)=\sum_{i=0}^{k-1} \pi ^{i}D_k(x\pi
 %^i)$. Therefore, $$ \Vert t_k \Vert_{cb}=\sum_{i=0}^{k-1} \Vert x \mapsto \pi ^{i}D_k(x\pi ^i)
 %\Vert_{cb}
 %\leq k$$
 %Hence $\Vert id_{M_l} \otimes t_k \Vert =inf ( k,l )$
 %and the last assertion of the lemma follows.
\end{proof}

In the next theorem, we denote by $A^{op}$ the opposite structure
of a $C^*$-algebra $A$ (see e.g. \cite[Paragraph 2.10]{Pi1} or
\cite[Paragraph 1.2.25]{BLM1} for details). More generally, if $X$
is an operator space, $X^{op}$ is the same vector space but with
the new matrix norms defined by $$\Vert [x_{ij}] \Vert
_{M_n(X^{op})}=\Vert [x_{ji}] \Vert _{M_n(X)} \quad \mbox{for~any}
~[x_{ij}] \in M_n(X).$$ Hence the assumption $(iii)$ in the next
theorem is equivalent to $$\Vert id_A \otimes t_k \Vert \leq n
\quad \mbox{for~any} ~k \in \mathbb{N}^*,$$ where $t_k$ denotes
the transpose mapping from $M_k$ to $M_k$ discussed above.

\begin{theorem}\label{2.2} Let $A$ be a $C^*$-algebra. Then the following are equivalent :
\begin{enumerate}[(i)]
\item $A$ is subhomogeneous of degree $\leq n$. \item $A$ is
$n$-minimal. \item $\Vert id : A \to A^{op} \Vert_{cb} \leq n$.
\end{enumerate}
\end{theorem}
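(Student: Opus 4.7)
The plan is to establish the cycle $(i)\Rightarrow(ii)\Rightarrow(iii)\Rightarrow(i)$. The first implication is immediate from the definitions, since any $\ast$-isomorphic embedding $A\hookrightarrow C(\Omega,M_n)$ is in particular a complete isometry, so $A$ is $n$-minimal as an operator space.

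For $(ii)\Rightarrow(iii)$, I would unwind the opposite operator space structure: if $T_k\colon M_k(A)\to M_k(A)$ denotes outer transposition $[a_{ij}]\mapsto[a_{ji}]$, then $\|\id\colon A\to A^{op}\|_{cb}=\sup_k\|T_k\|$. Assuming $A\subset C(\Omega,M_n)$ completely isometrically, it suffices to bound $\|T_k\|$ on the ambient $M_k(C(\Omega,M_n))$. Evaluation at $\omega\in\Omega$ reduces this to outer transposition on $M_k\otimes_{\min}M_n$; by symmetry of the minimal tensor product the latter has the same norm as $\id_{M_n}\otimes t_k$, and Lemma~\ref{2.1} (with the compact space taken to be a point) computes this to be $\inf(k,n)\le n$. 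Hence $\|\id\colon A\to A^{op}\|_{cb}\le n$.

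For $(iii)\Rightarrow(i)$, I would use the characterization of subhomogeneity via irreducible representations recalled just before Lemma~\ref{2.1}. Suppose, for contradiction, that $A$ admits an irreducible representation $\pi\colon A\to B(H)$ with $\dim H\ge n+1$; fix an $(n+1)$-dimensional subspace $V\subset H$ with orthogonal projection $P$, and set $\phi(a)=P\pi(a)P\in M_{n+1}$. The key observation is that the amplification $\pi^{(k)}\colon M_k(A)\to B(H^k)$ remains irreducible, because $(M_k\otimes\pi(A))'=M_k'\otimes\pi(A)'=\bC I$. Kadison's transitivity theorem applied to $\pi^{(k)}$ therefore lifts any $y\in M_k(M_{n+1})$, viewed as an operator on $V^k\subset H^k$, to some $x\in M_k(A)$ with $(\id_{M_k}\otimes\phi)(x)=y$ and $\|x\|\le(1+\ep)\|y\|$ for any prescribed $\ep>0$. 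Since outer transposition commutes with $\id_{M_k}\otimes\phi$ (both act only on the outer $M_k$ slot), hypothesis $(iii)$ applied to the lift yields
\[
\|y^T\|_{M_k(M_{n+1})}=\|(\id_{M_k}\otimes\phi)(x^T)\|\le\|x^T\|_{M_k(A)}\le n(1+\ep)\|y\|,
\]
so after letting $\ep\to 0$ the outer transposition on $M_k(M_{n+1})$ has norm $\le n$ for every $k$. But Lemma~\ref{2.1} identifies this norm with $\inf(k,n+1)$, which at $k=n+1$ forces $n+1\le n$, the desired contradiction.

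The step requiring most care is the matrix-level Kadison-transitivity lift: it is the irreducibility of the amplification $\pi^{(k)}$, not just of $\pi$, that allows an entire matrix in $M_k(M_{n+1})$ to be lifted jointly by a single element of $M_k(A)$ with essentially the correct norm. This joint lift is exactly what is needed to transfer the hypothesis $(iii)$ on $M_k(A)$ into a bound on the outer transposition of $M_k(M_{n+1})$, where Lemma~\ref{2.1} can then be applied to produce the contradiction.
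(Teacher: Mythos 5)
Your argument is correct, and its skeleton coincides with the paper's: $(i)\Rightarrow(ii)$ is immediate, $(ii)\Rightarrow(iii)$ follows from Lemma \ref{2.1} after flipping the two matrix factors of $M_k\otimes M_n$, and $(iii)\Rightarrow(i)$ is obtained by bounding the dimension of irreducible representations via the characterization of subhomogeneity recalled before the lemma. Where you genuinely diverge is the mechanism for transferring hypothesis $(iii)$ from $A$ to the irreducible representation. The paper fixes $k$ with $M_k\subset B(H)$, takes the norm-attaining element $x\in M_k(M_k)$ for the transposition, and approximates it by a net in the unit ball of $M_k(\pi(A))$ using the bicommutant theorem and the Kaplansky density theorem; the inequality $k\le n$ then comes from $w^*$-lower semicontinuity of the norm combined with the intertwining relation between $\pi_k$ and the outer transpositions. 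You instead compress to an $(n+1)$-dimensional subspace and apply the norm-controlled Kadison transitivity theorem to the (still irreducible) amplification $\pi^{(k)}$, producing an exact lift $x\in M_k(A)$ of a prescribed $y\in M_k(M_{n+1})$ with $\Vert x\Vert\le(1+\varepsilon)\Vert y\Vert$; since $\operatorname{id}_{M_k}\otimes\phi$ is completely contractive and commutes with outer transposition, the bound $\le n$ descends to $M_k(M_{n+1})$, and Lemma \ref{2.1} at $k=n+1$ gives the contradiction. The two routes rest on the same underlying density fact (transitivity is itself proved from Kaplansky density), but yours trades the limsup/semicontinuity bookkeeping for the stronger input of an exact lift with norm control $(1+\varepsilon)$ for arbitrary (not necessarily self-adjoint) finite-rank targets --- a standard refinement, but one you should cite or state precisely, as it is the only nontrivial ingredient your version adds. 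Both proofs are sound.
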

\begin{proof} $(i) \Rightarrow (ii)$ is obvious and $(ii) \Rightarrow
(iii)$ follows from the first equality in the previous lemma.
Suppose $(iii)$. Let $\pi : A \to B(H)$ be an irreducible
representation and $k \in \mathbb{N}^*$ such that $M_k \subset
B(H)$ ; from the first paragraph of this section, we must prove
that $k \leq n$. Using the previous lemma (with a singleton as
$\Omega$), there is $x \in M_k(M_k) \subset M_k(B(H))$ satisfying
$$k=\Vert id_{M_k} \otimes t_k(x) \Vert \quad \mbox{and}  \quad \Vert x \Vert \leq
1.$$ The representation $\pi_k=id_{M_k} \otimes \pi$ is also
irreducible so the commutant $\pi_k(M_k(A))'=\mathbb{C}I_{H^k}$,
thus by the von Neumann's double commutant theorem
$$\overline{M_k(\pi(A))}^{so}=M_k(B(H)).$$  Then by the Kaplansky density theorem, there exists a net
$(x_\lambda)_{\lambda \in \Lambda} \subset M_k(\pi(A))$ converging
to $x$ in the $\sigma$-strong operator topology and such that
$\Vert x_\lambda \Vert \leq 1$. Therefore $id_{B(H)} \otimes
t_k(x_\lambda)$ tends to $id_{M_k} \otimes t_k(x)$ in the
$w^*$-topology and by the semicontinuity of the norm in the
$w^*$-topology, we have
$$k=\Vert id_{M_k} \otimes t_k(x) \Vert \leq \limsup_\lambda \Vert
id_{B(H)} \otimes t_k(x_\lambda) \Vert$$ Let $\epsilon
> 0$. For any $\lambda$, there exists $y_\lambda \in M_k(A)$
such that $x_\lambda=\pi_k(y_\lambda)$ and $\Vert y_\lambda \Vert
\leq 1+\epsilon$. By assumption, $$\Vert id_A \otimes t_k \Vert
\leq n$$ Moreover $(id_{B(H)} \otimes t_k) \circ \pi_k=\pi_k \circ
(id_A \otimes t_k)$. Combining these arguments we finally obtain
$$\begin{array}{ccl}
               k=\Vert id_{M_k} \otimes t_k(x) \Vert&\leq&\limsup_\lambda \Vert
id_{B(H)} \otimes t_k(\pi_k(y_\lambda)) \Vert\\
               &\leq&\limsup_\lambda \Vert
\pi_k(id_{A} \otimes t_k(y_\lambda)) \Vert\\
&\leq& \Vert id_{A} \otimes t_k \Vert(1+ \epsilon)\\
&\leq& n(1+ \epsilon).\\
               \end{array}$$ Hence $k \leq n$.
\end{proof}

Now we extend $(i) \Leftrightarrow (ii)$ of the previous theorem,
which concerns $C^*$-algebras, to the larger category of operator
algebras and completely contractive homomorphisms.

\begin{corollary} \label{2.4}Let $A$ be an approximately unital
operator algebra. Then the following are equivalent :
\begin{enumerate}[(i)]
\item There exists a compact Hausdorf space $\Omega$ and a
completely isometric homomorphism $\pi:A \to C(\Omega,M_n)$. \item
$A$ is $n$-minimal.
\end{enumerate}
\end{corollary}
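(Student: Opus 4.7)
The implication $(i) \Rightarrow (ii)$ is immediate: $C(\Omega,M_n)$ is $n$-minimal, and $n$-minimality passes to completely isometric subspaces. Everything is in the converse $(ii) \Rightarrow (i)$, and the plan is to enlarge $A$ to a $C^*$-algebra that is still $n$-minimal and then invoke Theorem \ref{2.2}.

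The natural enlargement is the injective envelope $I(A)$. Since $A$ is approximately unital, the standard theory of injective envelopes of operator algebras (Hamana, or the Blecher--Paulsen construction) endows $I(A)$ with a unital $C^*$-algebra structure for which the canonical embedding $A \hookrightarrow I(A)$ is a completely isometric homomorphism. This is the key structural input and is the only non-routine ingredient in the proof. Alternatively, one could work with $\cenv(A) \subset I(A)$; either way one lands in a $C^*$-algebra containing $A$ as a completely isometric subalgebra.

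Once this $C^*$-envelopment is in hand, Proposition \ref{1.1} $(i)$ tells us that $I(A)$ is still $n$-minimal as an operator space, since $A$ is. Now Theorem \ref{2.2} applies to the $C^*$-algebra $I(A)$ and yields that $I(A)$ is subhomogeneous of degree $\leq n$, i.e. there exists a compact Hausdorf space $\Omega$ and a $*$-isomorphism (and in particular completely isometric homomorphism) $j:I(A) \to C(\Omega,M_n)$. Composing with the inclusion $A \hookrightarrow I(A)$ provides the desired completely isometric homomorphism $\pi:A \to C(\Omega,M_n)$.

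The main obstacle is thus not analytic but structural: one needs the fact that for an approximately unital operator algebra, its injective envelope (or $C^*$-envelope) is a genuine $C^*$-algebra in which $A$ sits as a subalgebra via a completely isometric homomorphism. Granting this, the remainder of the argument is a clean reduction to the $C^*$-algebraic case already handled in Theorem \ref{2.2}.
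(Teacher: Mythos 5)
Your proposal is correct and is essentially identical to the paper's own argument: both pass to the injective envelope $I(A)$, which is a $C^*$-algebra containing $A$ via a completely isometric homomorphism (\cite[Corollary 4.2.8]{BLM1}), use Proposition \ref{1.1} to see that $I(A)$ remains $n$-minimal, and then apply Theorem \ref{2.2}. No substantive difference.
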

\begin{proof} $(i) \Rightarrow (ii)$ is obvious. Suppose $(ii)$. We know that the injective envelope $I(A)$
is a $C^*$-algebra and there is a completely isometric
homomorphism from $A$ into $I(A)$ (see \cite[Corollary
4.2.8]{BLM1}). Since $A$ is $n$-minimal, $I(A)$ is $n$-minimal
too, by Proposition \ref{1.1}. Applying Theorem \ref{2.2} to
$I(A)$, we can conclude.
\end{proof}

 \begin{remark}\label{2.5} Using the well-known description of subhomogeneous $W^*$-algebras,
  we easily obtained that, if $M$ is a $W^*$-algebra and $M$ is $n$-minimal, then
  $$M=\oplus^{\infty}_{i \in I} L^\infty(\Omega_i,M_{n_i})$$ via a normal $*$-isomorphism.
  Here $\Omega_i$ is a measure space and
  $n_i \leq n$, for any $i \in I$. This result will be extended to the
 category of $W^*$-TROs (see Corollary \ref{4.5}).
 \end{remark}

\section{Injective $n$-minimal operator spaces}
Before describing injective $n$-minimal operator spaces, we can
treat the more `rigid' case of injective $n$-minimal
$C^*$-algebras as an easy consequence of \cite{SW}.

\begin{proposition}\label{3.1} Let $A$ be an $n$-minimal $C^*$-algebra. Then the following are equivalent :
\begin{enumerate}[(i)]
\item $A$ is injective. \item There exists a finite family of
Stonean compact Hausdorf spaces $(\Omega_i)_{i \in I}$ such that
$A=\oplus^{\infty}_{i \in I} C(\Omega_i,M_{n_i})$
$*$-isomorphically with $n_i \leq n$, for any $i \in I$.
\end{enumerate}
\end{proposition}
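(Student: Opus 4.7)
The implication $(ii) \Rightarrow (i)$ is routine. When $\Omega_i$ is Stonean, $C(\Omega_i)$ is an injective Banach space and hence, as a commutative $C^*$-algebra, an injective $C^*$-algebra. Since $C(\Omega_i,M_{n_i}) = C(\Omega_i) \otimes_{\min} M_{n_i}$ and the spatial tensor product with a finite-dimensional injective factor $M_{n_i}$ preserves injectivity, each summand is an injective $C^*$-algebra. A finite $\ell^\infty$-direct sum of injective operator spaces being again injective, $A$ is itself injective.

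For $(i) \Rightarrow (ii)$ the strategy is to combine Theorem \ref{2.2} with the classification of \cite{SW}. First, the $n$-minimality of $A$ forces, via Theorem \ref{2.2}, that $A$ be subhomogeneous of degree $\leq n$. Consequently $A$ admits a central decomposition $A = \oplus^{\infty}_{k=1}^{n} A_k$ where, for each $1 \leq k \leq n$, $A_k$ is the (possibly zero) $k$-homogeneous summand of $A$, cut out by a central projection. Setting $I := \{k : A_k \neq 0\} \subset \{1,\dots,n\}$ yields a finite index set of cardinality at most $n$.

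It remains to identify each nonzero $k$-homogeneous summand $A_k$ with some $C(\Omega_k,M_k)$ for $\Omega_k$ a Stonean compact Hausdorff space. Since $A_k$ is a corner of the injective $C^*$-algebra $A$ by a central projection, $A_k$ is again injective, and this identification is precisely the content of \cite{SW}. The underlying point is that the center $Z(A_k)$ inherits injectivity (via a conditional expectation onto the center), so $Z(A_k) \cong C(\Omega_k)$ with $\Omega_k$ Stonean; the homogeneity of $A_k$ then gives $A_k \cong Z(A_k) \otimes M_k = C(\Omega_k,M_k)$, the relevant Dixmier--Douady obstruction vanishing in this injective setting.

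The main obstacle is the last step---identifying an injective $k$-homogeneous $C^*$-algebra with a Stonean $C(\Omega_k,M_k)$---which is exactly where \cite{SW} is invoked. Everything else reduces to Theorem \ref{2.2} and standard stability properties of injectivity.
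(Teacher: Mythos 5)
Your $(ii)\Rightarrow(i)$ direction is fine and agrees with the paper. For $(i)\Rightarrow(ii)$ you take a genuinely different route: the paper does not invoke Theorem \ref{2.2} here at all. It observes that injectivity makes $A$ monotone complete, hence an $AW^*$-algebra, and then applies the dichotomy of \cite[Proposition 6.6]{SW} directly: such an algebra either contains $M_\infty=\oplus^{\infty}_k M_k$ or already has the stated form, and the first alternative is excluded by $n$-minimality. (Indeed the logical dependence runs the other way in the paper: Remark \ref{3.2} uses Proposition \ref{3.1} to give an alternative proof of part of Theorem \ref{2.2}. Your use of Theorem \ref{2.2} is not circular, since its proof in Section 2 is independent of Proposition \ref{3.1}, but it is a detour the paper avoids.)

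The genuine gap in your route is the sentence ``Consequently $A$ admits a central decomposition $A=\oplus_{k=1}^{n}A_k$'' into homogeneous summands. Subhomogeneity of degree $\leq n$ does not imply such a decomposition: the dimension of irreducible representations is only a semicontinuous function on the spectrum, so the $k$-homogeneous locus is in general only locally closed and need not be cut out by a central projection. For instance $\{f\in C([0,1],M_2)\,:\,f(0)\in\mathbb{C}I_2\}$ is subhomogeneous of degree $2$ but is not a direct sum of homogeneous $C^*$-algebras. What rescues the decomposition is precisely the ingredient you never name: injectivity forces $A$ to be monotone complete, hence an $AW^*$-algebra of type $\mathrm{I}$, and for such algebras the homogeneous summands \emph{are} cut out by central projections; moreover each finite type $\mathrm{I}_k$ summand is $*$-isomorphic to $M_k$ over its center, which is a $C(\Omega_k)$ with $\Omega_k$ Stonean (Kaplansky) --- this is the clean form of your ``Dixmier--Douady obstruction vanishes'' step. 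With the $AW^*$ structure inserted before the decomposition your argument closes, but as written the decomposition is asserted as a consequence of subhomogeneity alone, where it is false.
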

\begin{proof} As $A$ is injective, $A$ is monotone complete (see \cite[Theorem 6.1.3]{ER}). Thus $A$
is an $AW^*$-algebra. Moreover, by \cite[Proposition 6.6]{SW}, $A$
either contains $M_\infty=\oplus^{\infty}_k M_k$ or $A$ is of the
desired form. The first alternative is impossible because $A$ is
$n$-minimal, which ends the `only if' part. The converse is clear,
since each $\Omega_i$ is Stonean.
\end{proof}

\begin{remark}\label{3.2} This theorem enables us to give a short
proof of $(ii) \Rightarrow (i)$ in Theorem \ref{2.2}. If $A$ is an
$n$-minimal $C^*$-algebra, its injective envelope $I(A)$ is
$n$-minimal too (by Proposition \ref{1.1}). $I(A)$ is a
$C^*$-algebra and contains $A$ $*$-isomorphically (see
\cite[Theorem 6.2.4]{ER}). Applying the previous proposition to
$I(A)$, we obtain that
$$I(A)=\oplus^{\infty}_{i \in I} C(\Omega_i,M_{m_i}) \quad \mbox{
$*$-isomorphically}$$ with $n_i \leq n$, for any $i \in I$. And
now it is not difficult to construct a $*$-isomorphism from $A$
into $C(\Omega,M_n)$ where $\Omega$ denotes the (finite) disjoint
union of the $\Omega_i$'s.
\end{remark}

We recall that an operator space $X$ is \textit{unital} if there
exists $e \in X$ and a complete isometry from $X$ into a certain
$B(H)$ which sends $e$ on $I_H$. From the result below, an
$n$-minimal operator system can embed into a $C^*$-algebra of the
form $C(\Omega,M_n)$ via a unital complete order isomorphism.

\begin{corollary}\label{3.3} Let $X$ be a unital operator space. Then the following are equivalent :
\begin{enumerate}[(i)]
\item There exists a compact Hausdorf space $\Omega$ and a
completely isometric unital map $\pi:X \to C(\Omega,M_n)$. \item
$X$ is $n$-minimal.
\end{enumerate}
\end{corollary}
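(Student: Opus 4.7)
The implication $(i) \Rightarrow (ii)$ is immediate, since $C(\Omega, M_n)$ is itself $n$-minimal. For $(ii) \Rightarrow (i)$ my strategy mirrors Remark~\ref{3.2}: pass to the injective envelope. Since $X$ is a unital operator space, its injective envelope $I(X)$ is a unital $C^*$-algebra and the inclusion $X \hookrightarrow I(X)$ is a unital complete isometry (see \cite[\S4.2]{BLM1}). Proposition~\ref{1.1} transfers $n$-minimality from $X$ to $I(X)$, so Proposition~\ref{3.1} yields a decomposition
$$I(X) \;=\; \bigoplus_{i=1}^m C(\Omega_i, M_{n_i}), \qquad n_i \leq n,$$
with each $\Omega_i$ Stonean. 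Setting $\Omega := \sqcup_i \Omega_i$, the task reduces to producing, for each $k \leq n$, a unital complete isometry $\phi_k : M_k \to M_n$ and then applying it fiberwise.

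This fiberwise step is the main obstacle. The embedding used in Remark~\ref{3.2} was a $*$-homomorphism built from block-diagonal corners, but a unital $*$-homomorphism $M_k \hookrightarrow M_n$ exists only when $k \mid n$, which need not hold here. I propose instead the map
$$\phi_k(a) \;=\; a \;\oplus\; \tfrac{\mathrm{tr}(a)}{k}\, I_{n-k},$$
which decomposes as the sum of the $*$-homomorphism $a \mapsto a \oplus 0$ and the CP map $a \mapsto 0 \oplus (\mathrm{tr}(a)/k) I_{n-k}$, and which sends $I_k$ to $I_n$. UCP-ness supplies unital complete contractivity, and the compression of $M_n$ onto the top-left $k \times k$ corner is completely contractive and left-inverts $\phi_k$; hence $\phi_k$ is a unital complete isometry.

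Applying $\phi_{n_i}$ pointwise in $\omega \in \Omega_i$ yields unital complete isometries $C(\Omega_i, M_{n_i}) \hookrightarrow C(\Omega_i, M_n)$, and their direct sum is a unital complete isometry $I(X) \hookrightarrow C(\Omega, M_n)$. Composing with the inclusion $X \hookrightarrow I(X)$ produces the required $\pi$. The whole argument thus reduces to the elementary construction of $\phi_k$, which is the only genuinely new ingredient beyond the material of Section~3.
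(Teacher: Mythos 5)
Your proposal is correct and follows essentially the same route as the paper: pass to the injective envelope, apply Propositions \ref{1.1} and \ref{3.1}, and embed each $M_{n_i}$ unitally into $M_n$ via a trace-padded UCP map. The only cosmetic difference is that the paper builds the unital complete isometry $M_k \to M_n$ by iterating $x \mapsto x \oplus tr_k(x)$ one dimension at a time, whereas you write the padding $a \mapsto a \oplus \tfrac{\mathrm{tr}(a)}{k} I_{n-k}$ in a single step.
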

\begin{proof} $(i) \Rightarrow (ii)$ is obvious. Suppose $(ii)$. We know that the injective envelope $I(X)$
is a $C^*$-algebra and there is a unital complete isometry from
$X$ into $I(X)$ (see \cite[Corollary 4.2.8]{BLM1}). As $X$ is
$n$-minimal, $I(X)$ is $n$-minimal too (by Proposition \ref{1.1}).
By the previous theorem
$$I(X)=\oplus^{\infty}_{i \in I} C(\Omega_i,M_{n_i}) \mbox{
$*$-isomorphically}.$$ Next we show that for any $i$ there exists
a unital complete isometry $\varphi_i:M_{n_i} \to M_n$. By
iteration, we only need to prove that for any $k \in
\mathbb{N}^*$, there exists a unital complete isometry from $M_k$
into $M_{k+1}$. The map
$$\begin{array}{ccll}
               i_k ~:& M_k&\to &M_{k+1}\\
               &x & \mapsto & x \oplus tr_k(x)
               \end{array}$$
               (where $tr_k$ denotes the normalized trace on $M_k$) is
               a unital complete order isomorphism and thus a
               unital complete isometry. We can define a
               unital complete isometry $$\begin{array}{ccll}
               \psi ~:& \oplus^{\infty}_{i \in I} C(\Omega_i,M_{n_i})&\to &C(\Omega,M_n)\\
               &(f_i \otimes x_i )_i & \mapsto & \sum_i \tilde{f}_i \otimes
               \varphi_i(x_i)
               \end{array}$$ where $\Omega$ denotes the disjoint
               union of $\Omega_i$'s and $\tilde{f}_i$ the
               continuous extension by 0 of $f_i$ on $\Omega$.
               Finally, we have $$X \subset I(X) \subset C(\Omega,M_n)$$ via
               unital complete isometries.
\end{proof}

\begin{remark}\label{3.4} This last corollary cannot be extended to the
category of operator algebras and completely contractive
homomorphisms. In fact, if $\pi:M_p \to C(\Omega,M_q)$ is a unital
completely contractive homomorphism then $\pi$ is positive so it
is a $*$-homomorphism. Therefore (composing by an evaluation) we
can obtain a unital $*$-homomorphism from $M_p$ in $M_q$ and thus
$p$ divides $q$ (see \cite[Exercise 4.11]{P}).
\end{remark}

We must recall a crucial construction of the injective envelope of
an operator space $X$ which will be useful in this paper (see
 \cite[Paragraph 4.4.2]{BLM1} for more details on this
 construction). Assume that $X \subset B(H)$, we can consider its Paulsen system
$$S(X)=\left( \begin{array}{cc}
 \mathbb{C} & X \\
 X^\star & \mathbb{C}
 \end{array} \right) \subset M_2(B(H))$$ where $X^\star$ denotes the adjoint space of $X$.
 The injective envelope of $S(X)$
 is the range of a completely contractive projection $\varphi:M_2(B(H)) \to M_2(B(H))$ which leaves $S(X)$ invariant.
 By \cite[Theorem 6.1.3]{ER}, $I(S(X))$ admits a $C^*$-algebraic structure but it is not necessarily a sub-$C^*$-algebra of
 $M_2(B(H))$.
 However
 $$p=\left( \begin{array}{cc}
 1 & 0 \\
 0 & 0
 \end{array} \right) \quad \mbox{and} \quad q=\left( \begin{array}{cc}
 0 & 0 \\
 0 & 1
 \end{array} \right)=1-p$$ (which are invariant by $\varphi$) are
 still orthogonal projections (i.e. selfadjoint idempotents) of the new $C^*$-algebra
 $I(S(X))$. Since they satisfy $p+q=1$ and $pq=0$, we can decompose $I(S(X))$ in $2 \times 2$ matrices, as follow
 : $$I(S(X))=\left( \begin{array}{cc}
 I_{11}(X) & I_{12}(X) \\
 I_{21}(X) & I_{22}(X)
 \end{array} \right) $$ where $I_{11}(X)=pI(S(X))p$ and
 $I_{22}(X)=qI(S(X))q$ are injective $C^*$-algebras,
 $I_{12}(X)=pI(S(X))q$ is in fact the injective envelope of $X$ and
 $I_{21}(X)=qI(S(X))p$ coincides with $I_{12}(X)^\star$. Therefore, we obtain the
 Hamana-Ruan Theorem i.e. an injective operator space
 is an `off-diagonal' corner of an injective $C^*$-algebra (see \cite[Theorem 6.1.6]{ER}). It links the study of injective
 operator spaces to injective $C^*$-algebras (and, by the
 way, it proves that an injective operator space is a TRO).

\begin{theorem}\label{3.5} Let $X$ be an $n$-minimal operator space. Then the following are equivalent :
\begin{enumerate}[(i)]
\item $X$ is injective. \item There exists a finite family of
Stonean compact Hausdorf spaces $(\Omega_i)_{i \in I}$ such that
$X=\oplus^{\infty}_{i \in I} C(\Omega_i,M_{r_i,k_i})$ completely
isometrically with $r_i,k_i \leq n$, for any $i \in I$.
\end{enumerate}
\end{theorem}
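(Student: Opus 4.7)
The direction (ii)$\Rightarrow$(i) is routine: each summand $C(\Omega_i,M_{r_i,k_i})$ sits as an off-diagonal corner of the $C^*$-algebra $C(\Omega_i,M_{r_i+k_i})$, which is injective because $\Omega_i$ is Stonean; any such corner is an injective TRO, and a finite $\ell^\infty$-direct sum of injective operator spaces is injective.

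For (i)$\Rightarrow$(ii) I would combine the Hamana-Ruan decomposition recalled just above with Proposition \ref{3.1}. Fix a complete isometry $X\subset C(\Omega,M_n)$ and form $S(X)\subset M_2(C(\Omega,M_n))=C(\Omega,M_{2n})$. Since $C(\Omega)\subset C(\Omega')$ for some Stonean $\Omega'$, we obtain $S(X)\subset C(\Omega',M_{2n})$ completely isometrically, and $C(\Omega',M_{2n})$ is an injective $C^*$-algebra; consequently $I(S(X))$, being a minimal injective containing $S(X)$, can be realized inside $C(\Omega',M_{2n})$ and is therefore $2n$-minimal. Proposition \ref{3.1} then gives
$$I(S(X))=\oplus^\infty_{j\in J}C(\Omega_j,M_{m_j}) \quad *\mbox{-isomorphically}$$
with $J$ finite, each $\Omega_j$ Stonean, and $m_j\leq 2n$. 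Because $X$ is injective, $X=I_{12}(X)=p\cdot I(S(X))\cdot q$, where $p,q$ are the two diagonal Paulsen projections.

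The heart of the argument is to decompose this off-diagonal corner. In each summand, $p$ restricts to a continuous projection field $p_j:\Omega_j\to\{\mbox{projections in }M_{m_j}\}$; since the target is a finite disjoint union of Grassmannians, the rank of $p_j$ is locally constant and $\Omega_j=\bigsqcup_r\Omega_{j,r}$ as a finite clopen decomposition on the pieces of which $p_j$ has constant rank $r$ (and $q_j$ rank $m_j-r$). Because $C(\Omega_{j,r})$ is an $AW^*$-algebra, so is $M_{m_j}(C(\Omega_{j,r}))$, and any two projections of equal constant rank are Murray--von Neumann equivalent there; this produces a unitary $u\in C(\Omega_{j,r},M_{m_j})$ with $up_ju^*=\diag(I_r,0)$ and $uq_ju^*=\diag(0,I_{m_j-r})$. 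Conjugation by $u$ identifies the TRO $p_jC(\Omega_{j,r},M_{m_j})q_j$ with $C(\Omega_{j,r},M_{r,m_j-r})$ completely isometrically, and reindexing over pairs $(j,r)$ yields a finite direct sum
$$X=\oplus^\infty_i C(\Omega_i,M_{r_i,k_i}),\qquad r_i+k_i=m_j\leq 2n.$$

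Finally I must upgrade the bound to $r_i,k_i\leq n$. Each summand is an $\ell^\infty$-direct summand of the $n$-minimal $X$, hence is itself $n$-minimal, and then $M_{r_i,k_i}$ is $n$-minimal as a subspace (via constant functions). The inclusion $C_{r_i}=M_{r_i,1}\hookrightarrow M_{r_i,k_i}$ then reduces matters to showing that the column Hilbert space $C_s$ is not $n$-minimal when $s>n$: a completely isometric embedding $e_i\mapsto y_i\in M_n$ would, on testing with $a_i=e_{i1}$ and $a_i=e_{1i}$ in $M_s$, force simultaneously $\Vert\sum_i y_i^*y_i\Vert=s$ and $\Vert\sum_i y_iy_i^*\Vert=1$, so that $s\leq\Tr(\sum_i y_i^*y_i)=\Tr(\sum_i y_iy_i^*)\leq n$, contradicting $s>n$; a symmetric row-space argument gives $k_i\leq n$. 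The step I expect to require the most care is the Murray--von Neumann trivialization of the projection fields over the Stonean pieces; once that is secured, everything else reduces cleanly to Proposition \ref{3.1} and the Hamana-Ruan realization of $X$.
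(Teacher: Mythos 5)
Your proposal is correct and follows essentially the same route as the paper: realize $X$ as the off-diagonal corner $p\,I(S(X))\,(1-p)$, note $I(S(X))$ is $2n$-minimal and apply Proposition \ref{3.1}, trivialize the projection field over the Stonean base (the paper cites Deckard--Pearcy and Grove--Pedersen for exactly the unitary equivalence you derive from $AW^*$-comparison), partition by rank into clopen pieces, and bound the ranks by $n$ via the non-$n$-minimality of $R_{n+1}$ and $C_{n+1}$. You merely supply two details the paper leaves to citation or assertion (the Murray--von Neumann trivialization and the trace argument for $C_s$, $s>n$), so no further comparison is needed.
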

\begin{proof}$(ii) \Rightarrow (i)$ is obvious. Let $X$ be an injective
$n$-minimal operator space. By the discussion above, we know that
there exists an injective $C^*$-algebra $A$ and a projection $p
\in A$ such that
$$X=pA(1-p) \quad \mbox{completely
isometrically}$$ In fact $A$ is the injective envelope of $S(X)$
the Paulsen system of $X$ (see above). As $X$ is $n$-minimal,
$S(X)$ is $2n$-minimal, so is $A$ (by Proposition \ref{1.1}). From
Proposition \ref{3.1},
$$A=\oplus^{\infty}_{i \in I} C(\Omega_i,M_{m_i}) \quad \mbox{
$*$-isomorphically}$$ where $m_i \leq 2n$. For simplicity of
notation, we will assume momentarily that the cardinal of $I$ is
equal to 1 and so
$$X=pC(\Omega,M_m)(1-p) \quad \mbox{completely isometrically},$$
for some projection $p\in C(\Omega,M_m)$. Using \cite[Corollary
3.3]{DP} or \cite[Theorem 3.2]{GP}, there is a unitary $u$ of
$C(\Omega,M_m)$ such that for any $\omega \in \Omega$,
$upu^*(\omega)$ is of the form $diag(1,\dots,1,0,\dots,0)$. So we
may assume that for any $\omega \in \Omega$, $p(\omega)$ is a
diagonal matrix of the form given above. For any $k \leq m$, we
define
$$\Omega_k=\{ \omega \in \Omega~:~rg(p(\omega))=k \}$$ which is a
closed subset of $\Omega$ (because the rank and the trace of a
projection coincide) and the family $(\Omega_k)_{k \leq m}$ forms
a partition of $\Omega$. Hence, any $\Omega_k$ is open (and
closed) in $\Omega$, so $\Omega_k$ is still Stonean. We have the
completely isometric identifications
$$X=pC(\Omega,M_m)(1-p)=\oplus^{\infty}_{ k \leq m}
C(\Omega_k,M_{k,m-k})=\oplus^{\infty}_{1 \leq k \leq m-1}
C(\Omega_k,M_{k,m-k}).$$ Moreover, for any $1 \leq k \leq m-1$, we
have the completely isometric embeddings
$$M_{k,m-k} \subset C(\Omega_k,M_{k,m-k}) \subset X$$ and as $X$
is $n$-minimal, it forces $k \leq n$ and $m-k \leq n$ ; if not, at
least the row Hilbert space $R_{n+1}$ or the column Hilbert space
$C_{n+1}$ would be $n$-minimal. Thus $X$ has the announced form.
In general, $I$ is a finite set and $$X=p \oplus^{\infty}_{i \in
I} C(\Omega_i,M_{m_i}) (1-p)=\oplus^{\infty}_{i \in I} p_i
C(\Omega_i,M_{m_i})(1-p_i)$$ where $p_i$ is a projection in
$C(\Omega_i,M_{m_i})$ and $p=\oplus_i p_i$. Applying the preceding
argument to each terms $p_iC(\Omega_i,M_{m_i})(1-p_i)$, we can
conclude.
\end{proof}

\begin{corollary}\label{3.7} Let $X$ be an $n$-minimal dual operator space. Then the following are equivalent :
\begin{enumerate}[(i)]
\item $X$ is injective. \item There exists a finite
family of measure spaces $(\Omega_i)_{i \in I}$ such that \\
$X=\oplus^{\infty}_{i \in I} L^\infty(\Omega_i,M_{r_i,k_i})$ via a
completely isometric $w^*$-homeomorphism with $r_i,k_i \leq n$,
for any $i \in I$.
\end{enumerate}
\end{corollary}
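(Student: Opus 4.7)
The direction $(ii) \Rightarrow (i)$ is immediate: each summand $L^\infty(\Omega_i, M_{r_i, k_i})$ is an off-diagonal corner of the injective $W^*$-algebra $L^\infty(\Omega_i, M_n)$, hence is itself injective as an operator space, and a finite $\ell^\infty$-direct sum of injective operator spaces is injective.

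For $(i) \Rightarrow (ii)$ I would run the scheme of Theorem \ref{3.5} entirely in the dual category, so that $w^*$-continuity is preserved throughout. Since $X$ is a dual operator space, its Paulsen system $S(X)$ is a dual operator system; the crucial preliminary step is to realize the injective envelope $I(S(X))$ as a $W^*$-algebra with $S(X) \hookrightarrow I(S(X))$ a $w^*$-continuous complete order embedding, which is Hamana's $W^*$-injective envelope construction for dual operator systems. Granted this, $S(X)$ is $2n$-minimal (since $X \subset C(\Omega, M_n)$ forces $S(X) \subset C(\Omega, M_{2n})$), so Proposition \ref{1.1}(i) makes $I(S(X))$ a $2n$-minimal $W^*$-algebra, and Remark \ref{2.5} yields a normal $*$-isomorphism
$$I(S(X)) = \oplus^\infty_{i \in I} L^\infty(\Omega_i, M_{m_i}), \qquad m_i \leq 2n.$$
Writing $p = \oplus_i p_i$ for the Paulsen-system projection satisfying $X = p I(S(X))(1-p)$, one passes to the hyperstonean envelope $\Omega_i'$ (so that $L^\infty(\Omega_i, M_{m_i}) = C(\Omega_i', M_{m_i})$) and applies \cite[Corollary 3.3]{DP} or \cite[Theorem 3.2]{GP} exactly as in Theorem \ref{3.5}: a unitary $u_i$ diagonalizes $p_i$, the rank function is clopen-constant on $\Omega_i'$, and partitioning into the clopen pieces $\Omega_i'^{\,k}$ assembles into a $w^*$-continuous completely isometric identification
$$X = \oplus^\infty_{i \in I,\, 1 \leq k \leq m_i - 1} L^\infty(\Omega_i^k, M_{k, m_i - k}),$$
with $k \leq n$ and $m_i - k \leq n$ forced by $n$-minimality via the row/column Hilbert space embedding used in Theorem \ref{3.5}.

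The principal obstacle is the very first step---producing a $W^*$-algebraic injective envelope of $S(X)$ in a manner compatible with the original $w^*$-topology. The cleanest route invokes Hamana's theory for dual operator systems. A more self-contained alternative would start from Proposition \ref{1.1}(ii), embed $X$ as a $w^*$-closed subspace of $\ell_I^\infty(M_n)$, and construct the required $W^*$-algebra as the range of a \emph{normal} completely contractive projection; the nontrivial part there is arranging the completely contractive projection supplied by injectivity to be $w^*$-continuous, which would rely on the uniqueness of the operator-space predual for $n$-minimal injective spaces.
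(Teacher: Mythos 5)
Your $(ii)\Rightarrow(i)$ is fine. For $(i)\Rightarrow(ii)$, however, the entire argument rests on the step you yourself flag as the principal obstacle: producing a $W^*$-algebraic injective envelope of the Paulsen system $S(X)$ with a $w^*$-continuous embedding. That step is genuinely nontrivial --- the injective envelope of a dual operator system is in general only a monotone complete $C^*$-algebra, and making the Hamana projection normal is precisely the hard point --- and nothing in your write-up closes it. Your ``self-contained alternative'' defers the same difficulty (the $w^*$-continuity of the completely contractive projection supplied by injectivity). So as written the proof has a gap at its foundation, and the gap is not a technicality: it is the one thing your scheme needs that Theorem \ref{3.5} does not already provide.

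The missing observation is that none of this dual-category machinery is needed. Injectivity of $X$ is a purely operator-space property, so Theorem \ref{3.5} applies verbatim and already gives $X=\oplus^{\infty}_{i\in I} C(K_i,M_{r_i,k_i})$ completely isometrically, with $I$ finite, $K_i$ Stonean and $r_i,k_i\leq n$. This is the paper's entire first (and essentially only) step; duality enters only afterwards, to upgrade the Stonean spaces: since $X$ is a dual operator space, each $C(K_i)$ is forced to be a dual commutative $C^*$-algebra, i.e.\ $C(K_i)=L^\infty(\Omega_i)$ via a normal $*$-isomorphism, which is exactly statement (ii). (To justify the ``completely isometric $w^*$-homeomorphism'' clause one then appeals to the uniqueness of the predual of the $W^*$-TRO $\oplus_i L^\infty(\Omega_i,M_{r_i,k_i})$ --- a far lighter input than a $W^*$-injective-envelope theory.) In short: you rebuilt Theorem \ref{3.5} inside the $W^*$-category, where its key ingredient is unavailable to you, instead of applying it as a black box and letting duality do the one remaining piece of work.
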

\begin{proof} From the previous theorem, $X=\oplus^{\infty}_{i}
C(K_i,M_{r_i,k_i})$ completely isometrically, where $K_i$ is
Stonean. Since $X$ is a dual operator space, it forces $C(K_i)$ to
be a dual commutative $C^*$-algebra i.e.
$C(K_i)=L^\infty(\Omega_i)$ (via a normal $*$-isomorphism) for
some measure space $\Omega_i$.
\end{proof}

\section{Application to $n$-minimal TROs}
In this section, we will use the description of injective
$n$-minimal operator spaces to obtain results on $n$-minimal TROs.
First, we will see that the $n$-minimal operator structure of a
TRO determines its whole triple structure. See e.g. \cite{EOR} or
\cite[Section 8.3]{BLM1} for details on TROs.

\begin{proposition}\label{4.1} Let $X$ be a TRO. The following are equivalent :
\begin{enumerate}[(i)]
\item There exists a compact Hausdorf space $\Omega$ and an
injective triple morphism $\pi:X \to C(\Omega,M_n)$. \item $X$ is
$n$-minimal.
\end{enumerate}
\end{proposition}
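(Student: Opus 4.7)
The plan is to use the injective envelope together with Theorem~\ref{3.5} to reduce to an explicit embedding problem for the building blocks $M_{r,k}$ inside $M_n$.

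For $(i) \Rightarrow (ii)$, recall the basic rigidity property of TROs: an injective triple morphism between TROs is automatically a complete isometry (see \cite[Section 8.3]{BLM1}). Hence an injective triple morphism $\pi:X \to C(\Omega,M_n)$ furnishes a completely isometric embedding, so $X$ is $n$-minimal.

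For $(ii) \Rightarrow (i)$, I would first pass to the injective envelope $I(X)$. By Proposition~\ref{1.1}, $I(X)$ is $n$-minimal, and the canonical inclusion $X \hookrightarrow I(X)$ is a triple morphism because the injective envelope of a TRO is an injective envelope in the TRO category (again \cite[Section 8.3]{BLM1}; equivalently, all completely isometric identifications appearing in the construction at the end of Section 3 respect triple products, since the corner $pI(S(X))(1-p)$ is a sub-TRO of the $C^*$-algebra $I(S(X))$). By Theorem~\ref{3.5},
$$I(X) = \oplus^{\infty}_{i \in I} C(\Omega_i, M_{r_i,k_i}) \qquad \text{with } r_i,k_i \leq n,$$
and this identification is in fact a triple isomorphism, since a complete isometry between TROs is a triple morphism.

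It then remains to exhibit, for each $i$, an injective triple morphism $C(\Omega_i, M_{r_i,k_i}) \to C(\Omega_i, M_n)$ and to assemble them. For the first task, since $r_i,k_i \leq n$, the embedding $\iota_i : M_{r_i,k_i} \hookrightarrow M_n$ which sends a rectangular matrix to the upper-left $r_i \times k_i$ corner of an $n \times n$ matrix (with zeros elsewhere) is a triple morphism: the triple product $xy^\star z$ computed inside $M_n$ agrees with $\iota_i(xy^\star z)$. Extending pointwise gives an injective triple morphism $C(\Omega_i, M_{r_i,k_i}) \to C(\Omega_i, M_n)$. For the assembly step, let $\Omega$ denote the one-point compactification of the disjoint union $\sqcup_i \Omega_i$ (the index set $I$ is finite in Theorem~\ref{3.5}, so one may simply take the disjoint union) and send each family $(f_i)_{i \in I}$ to the function on $\Omega$ whose restriction to $\Omega_i$ is $\iota_i \circ f_i$. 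This is an injective triple morphism $I(X) \to C(\Omega, M_n)$, and composing with $X \hookrightarrow I(X)$ yields the desired map.

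The main potential obstacle is verifying that every identification used is a triple morphism rather than merely a complete isometry; this is handled uniformly by the rigidity principle for TROs, so once Theorem~\ref{3.5} is invoked the remaining work is the purely algebraic observation that rectangular corner embeddings preserve the triple product.
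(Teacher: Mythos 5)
Your proof is correct and follows essentially the same route as the paper: rigidity of triple morphisms for $(i)\Rightarrow(ii)$, and for $(ii)\Rightarrow(i)$ the passage to $I(X)$ as a TRO, Theorem~\ref{3.5}, the upper-left corner embeddings $M_{r_i,k_i}\hookrightarrow M_n$, and assembly over the (finite) disjoint union. One small precision: the rigidity fact you invoke is that a \emph{surjective} complete isometry between TROs is automatically a triple morphism (a non-surjective complete isometry need not be), but since the identification of $I(X)$ with the direct sum is surjective, your application of it is legitimate.
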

\begin{proof}$(i) \Rightarrow (ii)$ follows from the fact that an
injective triple morphism is necessarily completely isometric (see
e.g. \cite[Proposition 2.2]{EOR}
or \cite[Lemma 8.3.2]{BLM1}).\\
Suppose $(ii)$. By \cite[Remark 4.4.5 (1)]{BLM1}, the injective
envelope of $X$ admits a TRO structure and $X$ can be viewed
 as a
sub-TRO of $I(X)$. From Theorem \ref{3.5}, we can describe this
injective envelope as a direct sum,
$$I(X)=\oplus^{\infty}_{i \in I} C(\Omega_i,M_{r_i,k_i}) \quad \mbox{completely
isometrically}.$$ But the right hand side of the equality admits a
canonical TRO structure and it is known (see e.g. \cite[Corollary
4.4.6]{BLM1}) that a surjective complete isometry between TROs is
automatically a triple morphism. In addition, for any $i$, the
embedding $\varphi_i:M_{r_i,k_i} \to M_n$ into the `up-left'
corner of $M_n$ is an injective triple morphism. As in the end of
the proof of Corollary \ref{3.3}, we finally obtain
$$X \subset I(X)=\oplus^{\infty}_{i \in I} C(\Omega_i,M_{r_i,k_i}) \subset
C(\Omega,M_n)$$ as TROs.
\end{proof}

For details on $C^*$-modules theory, the readers are referred to
\cite{L} or \cite[Chapter 8]{BLM1} for an operator space approach.
We must recall the construction of \textit{the linking
$C^*$-algebra} of a $C^*$-module. If $X$ is left $C^*$-module over
a $C^*$-algebra $A$ then its conjugate vector space $\overline{X}$
is a right $C^*$-module over $A$ with the action $\overline{x}
\cdot a=\overline{a^*x}$ and inner product $\langle \overline{x} ,
\overline{y} \rangle = \langle x , y \rangle$, for any $a \in A$,
$x,y \in X$. We denote by $_A \mathbb{K}(X)$ the $C^*$-algebra of
`compact' adjointable maps of $X$ and then
$$\mathcal{L}(X)=\left( \begin{array}{cc}
 A & X \\
 \overline{X} & _A \mathbb{K}(X)
 \end{array} \right)$$ is a $C^*$-algebra too which is called
 \textit{the linking $C^*$-algebra of} $X$. If $X$ is an equivalence bimodule (see
 \cite[Paragraph 8.1.2]{BLM1}) over two $C^*$-algebras $A$ and $B$, we define
 $$\mathcal{L}(X)=\left( \begin{array}{cc}
 A & X \\
 \overline{X} & B
 \end{array} \right) \quad \mbox{and} \quad \mathcal{L}^1(X)=\left( \begin{array}{cc}
 A^1 & X \\
 \overline{X} & B^1
 \end{array} \right)$$ (where $A^1$ and $B^1$ denote the unitizations of $A$ and $B$) which are also $C^*$-algebras (see \cite[Paragraph 8.1.17]{BLM1} for
 details on linking $C^*$-algebra). We can
 notice that $X$ is an `off-diagonal' corner of a $C^*$-algebra
 i.e. $X=p\mathcal{L}^1(X) (1-p)$ for some projection $p \in \mathcal{L}^1(X)$.
 Hence a $C^*$-module admits a TRO structure. The converse will be seen later on, which
 will make the correspondence between $C^*$-modules, equivalence bimodules and TROs (see \cite[Paragraph 8.1.19, 8.3.1]{BLM1}). Thus the
 next corollary is a reformulation of the previous
 proposition in the $C^*$-modules language. However, this corollary on representation of
 module action can be compared with Theorem \ref{5.3}.

\begin{corollary}\label{4.2} Let $X$ be a full left $C^*$-module over a $C^*$-algebra $A$. Then the following are equivalent :
\begin{enumerate}[(i)]
\item There exists a compact Hausdorf space $\Omega$, a complete
isometry $i:X \to C(\Omega,M_n)$ and a $*$-isomorphism $\sigma:A
\to C(\Omega,M_n)$ such that for any $a \in A$, $x,y \in X$
$$\begin{array}{c}
             i(a \cdot x)=\sigma(a)i(x)  \\
\sigma(\langle x,y \rangle)=i(x)i(y)^*\\
               \end{array}$$ \item $X$ is $n$-minimal and $A$ is subhomogeneous of degree $\leq n$.
                \item $X$ is $n$-minimal.
\end{enumerate}
\end{corollary}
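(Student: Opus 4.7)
The implications (i) $\Rightarrow$ (ii) and (ii) $\Rightarrow$ (iii) are immediate: the complete isometry $i$ exhibits $X$ as $n$-minimal, while the injective $*$-homomorphism $\sigma$ realises $A$ as a $C^*$-subalgebra of $C(\Omega,M_n)$, hence subhomogeneous of degree $\leq n$; (ii) $\Rightarrow$ (iii) is tautological.

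The content lies in (iii) $\Rightarrow$ (i). The plan is to produce $i$ via Proposition \ref{4.1} and then build $\sigma$ from $i$ using the linking $C^*$-algebra picture. Concretely, apply Proposition \ref{4.1} to obtain an injective triple morphism $i:X\to C(\Omega,M_n)$, which is automatically a complete isometry. Since $X$ is a full left $C^*$-module, $A$ is the closed linear span of $\{\langle x,y\rangle:x,y\in X\}$, and under the canonical realisation $X=p\mathcal{L}^1(X)(1-p)$ the inner product $\langle x,y\rangle$ coincides with the triple product $xy^*$. One then defines $\sigma(\sum_k\langle x_k,y_k\rangle)=\sum_k i(x_k)i(y_k)^*$ on this dense $*$-subalgebra and extends by continuity to all of $A$.

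The technical heart of the argument is the well-definedness, multiplicativity, and $*$-preservation of $\sigma$. The slickest route is to invoke the standard fact (see e.g.\ \cite[Section 8.3]{BLM1}) that an injective triple morphism between TROs extends uniquely to an injective $*$-homomorphism of their linking $C^*$-algebras; restricting this extension to the upper-left corner of $\mathcal{L}^1(X)$ yields $\sigma$. The identities $i(a\cdot x)=\sigma(a)i(x)$ and $\sigma(\langle x,y\rangle)=i(x)i(y)^*$ then hold on generators by construction and extend by density. Injectivity of $\sigma$ follows from injectivity of $i$ combined with fullness: if $\sigma(a)=0$ then $i(a\cdot x)=\sigma(a)i(x)=0$, so $a\cdot x=0$ for every $x\in X$, whence $a\langle y,z\rangle=\langle a\cdot y,z\rangle=0$ for all $y,z\in X$, and fullness forces $a=0$. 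The only genuine obstacle is ensuring the $*$-homomorphic extension lands inside $C(\Omega,M_n)$ rather than in a larger ambient $C^*$-algebra; this is automatic since $i(X)i(X)^*\subset C(\Omega,M_n)$ already, so $\sigma(A)$ is simply the $C^*$-subalgebra of $C(\Omega,M_n)$ generated by $i(X)i(X)^*$.
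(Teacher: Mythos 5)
Your argument is correct and follows essentially the same route as the paper: obtain the triple morphism $i$ from Proposition \ref{4.1}, then extend it to a corner-preserving $*$-homomorphism of linking $C^*$-algebras (the paper cites \cite[Corollary 8.3.5]{BLM1} for exactly this) and take $\sigma$ to be the $(1,1)$-corner, with fullness identifying that corner with $A$. Nothing further is needed.
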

\begin{proof} Only $(iii) \Rightarrow (i)$ needs a proof. Since $X$ is a $C^*$-module, it's also a TRO (see
above). From Proposition \ref{4.1}, there exists a compact
Hausdorf space $\Omega$ and an injective triple morphism $i:X \to
C(\Omega,M_n)$. By \cite[Corollary 8.3.5]{BLM1}, we can construct
a corner preserving $*$-isomorphism $\pi : \mathcal{L}(X) \to
M_2(C(\Omega,M_n))$ such that $i=\pi_{12}$. Choosing
$\sigma=\pi_{11}$, we obtain the desired relations.
\end{proof}

An equivalence bimodule version of the previous corollary could be
stated. In the previous result we transfer $n$-minimality from $X$
to $A$. We can treat the `reverse' question ; let $X$ be an
equivalence bimodule over two $n$-minimal $C^*$-algebras, we will
prove that $X$ is $n$-minimal. But first, let us translate this
proposition in the TROs language. Let $X$ be a TRO contained in a
$C^*$-algebra $B$ via an injective triple morphism. As in the
notation of the second section of \cite{R}, we define $C(X)$
(resp. $D(X)$) the norm closure of $span \{xy^*,~x,y \in X \}$
(resp. $span \{x^*y,~x,y \in X \}$). As $X$ is a sub-TRO of $B$,
$C(X)$ and $D(X)$ are sub-$C^*$-algebras of $B$ and
$$A(X)=\left( \begin{array}{cc}
 C(X) & X \\
 X^\star & D(X)
 \end{array} \right)$$ is a sub-$C^*$-algebras of $M_2(B)$. Hence
 a TRO can be regarded as an `off-diagonal' corner of a
 $C^*$-algebra which prove totally
 the correspondence between $C^*$-modules, equivalence bimodules and
 TROs. And $A(X)$ is also called \textit{the linking $C^*$-algebra
 of}
 $X$. Analogously, in $W^*$-TROs category, let $X$ be a $W^*$-TRO contained in a
$W^*$-algebra $B$ via a $w^*$-continuous injective triple
morphism. We define $M(X)$ (resp. $N(X)$) the $w^*$-closure of
$span \{xy^*,~x,y \in X \}$ (resp. $span \{x^*y,~x,y \in X \}$).
As $X$ is a sub-$W^*$-TRO of $B$, $M(X)$ and $N(X)$ are
sub-$W^*$-algebras of $B$ and
$$R(X)=\left( \begin{array}{cc}
 M(X) & X \\
 X^\star & N(X)
 \end{array} \right)$$ is a sub-$W^*$-algebras of $M_2(B)$. It is called \textit{the linking von Neumann algebra of}
 $X$. In fact, the linking algebras do not depend on the embedding of $X$ into a $C^*$-algebra.\\
 Obviously, if $X$ is an equivalence bimodule over two $C^*$-algebras $A$ and $B$,
 $C(X)$ and $D(X)$ play the roles of $A$ and $B$ in the correspondence between equivalence bimodules and
 TROs. Hence in the TROs language, we obtain (in the dual case) :

\begin{proposition}\label{4.3} Let $X$ be a $W^*$-TRO such that $M(X)$ and $N(X)$ are
$n$-minimal von Neumann algebras. Then $X$ is $n$-minimal and
$$X=\oplus^{\infty}_i ~ L^\infty(\Omega_i) \overline{\otimes} M_{r_i,k_i}$$
where $\Omega_i$ is a measure space, $r_i,k_i \leq n$, for any
$i$.
\end{proposition}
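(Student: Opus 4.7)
The plan is to reduce the problem to the structural classification of the linking von Neumann algebra $R(X) = \left(\begin{smallmatrix} M(X) & X \\ X^\star & N(X) \end{smallmatrix}\right)$ as a type I $W^*$-algebra, and then unpack $X = pR(X)(1-p)$ (with $p$ the upper-left corner projection) through the resulting central disintegration. By Remark \ref{2.5}, the $n$-minimal $W^*$-algebras $M(X)$ and $N(X)$ are of the form $\oplus^{\infty}_i L^\infty(\Omega_i) \overline{\otimes} M_{n_i}$ with $n_i \leq n$, hence both are type I and subhomogeneous. If $z$ were a central projection of $R(X)$ supporting a non-type-I direct summand, then the corner $zpR(X)pz = zM(X)$ would be a corner of $zR(X)$, hence itself non-type-I (type is preserved under cornering), contradicting the structure of $M(X)$ unless $zp = 0$. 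The symmetric argument with $N(X)$ and $1-p$ then forces $z(1-p) = 0$, hence $z = 0$. So $R(X)$ is type I, and the classification theorem yields
\[ R(X) = \oplus_\alpha L^\infty(\Omega_\alpha) \overline{\otimes} B(H_\alpha) \]
with pairwise distinct cardinals $\dim H_\alpha$.

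Writing $p = (p_\alpha)_\alpha$ in this decomposition, we have $M(X) = \oplus_\alpha L^\infty(\Omega_\alpha) \overline{\otimes} p_\alpha B(H_\alpha) p_\alpha$, and $n$-minimality of $M(X)$ forces $\operatorname{rank} p_\alpha(\omega) \leq n$ almost everywhere; dually $\operatorname{rank} (1-p_\alpha)(\omega) \leq n$. Hence $\dim H_\alpha \leq 2n$, so only finitely many $\alpha$ contribute and each $H_\alpha$ is finite-dimensional of some dimension $m_\alpha \leq 2n$. Identifying $L^\infty(\Omega_\alpha) = C(\hat{\Omega}_\alpha)$ for the hyperstonean Gelfand space of $L^\infty(\Omega_\alpha)$, the Dye--Pedersen / Grove--Pedersen theorem used in the proof of Theorem \ref{3.5} provides a unitary $u_\alpha$ conjugating $p_\alpha$ pointwise to $\diag(1,\dots,1,0,\dots,0)$ with $\operatorname{rank} p_\alpha(\omega)$ ones. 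The clopen partition $\{\operatorname{rank} p_\alpha(\omega) = k\}_{0 \leq k \leq m_\alpha}$ induces measurable subsets $\Omega_\alpha^k \subset \Omega_\alpha$ on which
\[ p_\alpha \left(L^\infty(\Omega_\alpha^k) \overline{\otimes} B(H_\alpha)\right) (1-p_\alpha) = L^\infty(\Omega_\alpha^k) \overline{\otimes} M_{k, m_\alpha - k} \]
$w^*$-continuously and completely isometrically. Assembling over all $\alpha$ and $k$ yields $X = \oplus^{\infty}_i L^\infty(\Omega_i) \overline{\otimes} M_{r_i, k_i}$ with $r_i, k_i \leq n$. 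The evident $w^*$-continuous complete isometric embedding of each summand into $L^\infty(\Omega_i, M_n)$ by padding with zeros then shows $X$ is $n$-minimal.

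The main obstacle is the type I verification for $R(X)$, namely ensuring that any non-type-I central direct summand of $R(X)$ would descend to a non-type-I corner in either $M(X)$ or $N(X)$, contradicting Remark \ref{2.5}. Once this is in place, the type I classification provides the required direct integral decomposition of $R(X)$, and the remainder of the argument is essentially an $L^\infty$-valued repetition of the pointwise diagonalization procedure used in Theorem \ref{3.5}.
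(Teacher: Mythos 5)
Your proof is correct. The first half --- showing that the linking von Neumann algebra $R(X)$ is of type I by pushing any non--type-I central summand down to a corner of $M(X)$ or of $N(X)$ --- is exactly the paper's argument, which phrases it via the three central projections of the type decomposition from Kadison--Ringrose together with the fact that compression preserves type. Where you genuinely diverge is in extracting the structure of $X$ from the type I-ness of $R(X)$: at this point the paper simply invokes Ruan's structure theorem for type I $W^*$-TROs, \cite[Theorem 4.1]{R}, which yields $X=\oplus^{\infty}_k L^\infty(\Omega_k)\overline{\otimes}B(\ell^2_{I_k},\ell^2_{J_k})$ directly, and then bounds the cardinals of $I_k$ and $J_k$ by $n$ using the $n$-minimality of $M(X)$ and $N(X)$. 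You instead re-derive this in the case at hand from the homogeneous decomposition of the type I algebra $R(X)$, bound $\dim H_\alpha\le 2n$ via the ranks of $p_\alpha$ and $1-p_\alpha$, and then repeat the Grove--Pedersen diagonalization of Theorem \ref{3.5} over the hyperstonean spectrum of $L^\infty(\Omega_\alpha)$. This is more self-contained (it removes the dependence on \cite{R}) at the cost of redoing, in the $L^\infty$ setting, work that \cite[Theorem 4.1]{R} packages for arbitrary type I $W^*$-TROs, including the infinite-dimensional homogeneous summands which you must first rule out. Two small points to tighten: your display $M(X)=\oplus_\alpha L^\infty(\Omega_\alpha)\overline{\otimes}p_\alpha B(H_\alpha)p_\alpha$ should read $\oplus_\alpha p_\alpha\bigl(L^\infty(\Omega_\alpha)\overline{\otimes}B(H_\alpha)\bigr)p_\alpha$ before the pointwise diagonalization, since $p_\alpha$ is a measurable field of projections rather than a constant one; and the claim that $\operatorname{rank}p_\alpha(\omega)\le n$ almost everywhere deserves the one-line justification that otherwise $M(X)$ would contain a homogeneous type $\mathrm{I}_k$ direct summand with $k>n$, contradicting the subhomogeneity of degree $\le n$ guaranteed by Remark \ref{2.5}.
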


\begin{proof} We write $R(X)$ the linking von Neumann of $X$. From
\cite[Theorem 6.5.2]{KadR}, there exist $p_1,p_2$ and $p_3$ three
central projections of $R(X)$ such that $$R(X)= p_1R(X)
\oplus^{\infty} p_2R(X) \oplus^{\infty} p_3R(X)$$ and for
$i=1,2,3$, $p_iR(X)$ is a von Neumann algebra of type $i$ or
$p_i=0$. Since $M(X)$ is $n$-minimal, $M(X)$ is of type $I$.
However, $M(X)=pR(X)p$ for some projection $p$ in $R(X)$ and for
any $i$,
$$p_iM(X)=pp_ipM(X)pp_ip$$ As the type is unchanged by compression
(see \cite[Exercise 6.9.16]{KadR}), $p_iM(X)$ is of type $I$ or
$p_iM(X)=0$. On the other hand, for any $i$,
$$p_iM(X)=p_ipR(X)=pp_iR(X)p_ip$$ so $p_iM(X)$ has the same type as
$p_iR(X)$ or $p_iM(X)=0$. Thus $p_iM(X)=0$ for $i=2,3$ i.e.
$p_ip=0$ for $i=2,3$. Symmetrically, using our assumption on
$N(X)$, we have $p_i(1-p)=0$ for $i=2,3$. Hence $p_i=0$ for
$i=2,3$ i.e. $R(X)$ is of type $I$. Using \cite[Theorem 4.1]{R},
$$X=\oplus^{\infty}_k ~ L^\infty(\Omega_k) \overline{\otimes} M_{I_k,J_k}$$
where $\Omega_k$ is a measure space, $I_k,J_k$ are sets and
$M_{I_k,J_k}=B(\ell^2_{I_k},\ell^2_{J_k})$. Since $M(X)$ (resp.
$N(X)$) is $n$-minimal, it forces the cardinal of $I_k$ (resp.
$J_k$) to be no greater than $n$, for any $k$. So $X$ is
$n$-minimal and has the desired form.
\end{proof}

\begin{remark}\label{4.4} In the next two results, we will use that the multiplier
algebra of an $n$-minimal $C^*$-algebra is $n$-minimal too. It is
due to Proposition \ref{1.1}.
\end{remark}

The next corollary on $W^*$-TROs extends Remark \ref{2.5}.

\begin{corollary}\label{4.5} Let $X$ be a $W^*$-TRO. The following are equivalent :
\begin{enumerate}[(i)]
\item $X$ is $n$-minimal.  \item There exists a measure space
$\Omega$ and a $w^*$-continuous injective triple morphism $\pi:X
\to L^\infty(\Omega,M_n)$. \item There exists a finite
family of measure spaces $(\Omega_i)_{i \in I}$ such that \\
$X=\oplus^{\infty}_{i \in I} L^\infty(\Omega_i,M_{r_i,k_i})$ with
$r_i,k_i \leq n$, for any $i \in I$.
\end{enumerate}
\end{corollary}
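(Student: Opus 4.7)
The plan is to prove the cycle $(iii) \Rightarrow (ii) \Rightarrow (i) \Rightarrow (iii)$, where the first two implications are routine and the third is the substantive one. For $(iii) \Rightarrow (ii)$, I embed each $M_{r_i,k_i}$ into the upper-left corner of $M_n$ (a $w^*$-continuous injective triple morphism since $r_i, k_i \leq n$), tensor with $\id_{L^\infty(\Omega_i)}$, and take $\Omega = \sqcup_{i \in I}\Omega_i$ (a disjoint union of measure spaces), so that $L^\infty(\Omega, M_n) = \oplus^{\infty}_{i \in I} L^\infty(\Omega_i, M_n)$ receives the finite direct sum of the component maps as the required $\pi$. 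For $(ii) \Rightarrow (i)$, $L^\infty(\Omega) \cong C(\widehat\Omega)$ for a Stonean space $\widehat\Omega$, so $L^\infty(\Omega, M_n)$ is $n$-minimal; since an injective triple morphism is automatically completely isometric, $X$ becomes $n$-minimal.

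The substantive implication $(i) \Rightarrow (iii)$ I reduce to showing that $M(X)$ and $N(X)$ are $n$-minimal $W^*$-algebras, so that Proposition \ref{4.3} applies. Proposition \ref{4.1} provides an injective triple morphism $\pi : X \to C(K, M_n)$; by the rigidity of triple morphisms, $\pi$ extends to an injective $*$-homomorphism of linking $C^*$-algebras, which restricts to injective $*$-homomorphisms $C(X) \hookrightarrow C(K, M_n)$ and $D(X) \hookrightarrow C(K, M_n)$. Thus $C(X)$ and $D(X)$ are $n$-minimal $C^*$-algebras, and by Proposition \ref{1.1} their biduals $C(X)^{**}$ and $D(X)^{**}$ are $n$-minimal $W^*$-algebras. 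Fixing a faithful $W^*$-TRO representation of $X$, the norm-dense inclusions $C(X) \subset M(X)$ and $D(X) \subset N(X)$ extend to normal surjective $*$-homomorphisms $C(X)^{**} \to M(X)$ and $D(X)^{**} \to N(X)$. A normal surjective $*$-homomorphism of $W^*$-algebras is a reduction by a central projection, so $M(X)$ and $N(X)$ sit as $w^*$-closed direct summands of $n$-minimal $W^*$-algebras, hence are themselves $n$-minimal.

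Applying Proposition \ref{4.3} now yields $X = \oplus^{\infty}_{i} L^\infty(\Omega_i) \overline{\otimes} M_{r_i, k_i}$ with $r_i, k_i \leq n$ for every $i$. Since the pair $(r_i, k_i)$ ranges over the finite set $\{1,\dots,n\}^2$, I consolidate by merging all summands with $(r_i,k_i) = (r,k)$ into a single term $L^\infty(\sqcup_i\Omega_i, M_{r,k})$ (using that $L^\infty$ of a disjoint union is the $\ell^\infty$-direct sum of the factor $L^\infty$-spaces), producing the finite direct sum of $(iii)$. The hard part will be the $n$-minimality of $M(X)$ and $N(X)$: Proposition \ref{4.1} only yields a norm-continuous TRO embedding into $C(K, M_n)$, which does not a priori place $M(X)$ inside any $n$-minimal ambient algebra; passing through the bidual $C(X)^{**}$ is the device that converts the $n$-minimal $C^*$-algebra $C(X)$ into an $n$-minimal $W^*$-envelope surjecting onto $M(X)$.
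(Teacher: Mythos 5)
Your proof is correct, and its overall skeleton coincides with the paper's: reduce $(i)\Rightarrow(iii)$ to showing that $M(X)$ and $N(X)$ are $n$-minimal von Neumann algebras, then invoke Proposition \ref{4.3}. Where you genuinely diverge is in the key step transferring $n$-minimality from $C(X)$ and $D(X)$ to $M(X)$ and $N(X)$. The paper quotes the Kaur--Ruan result \cite{KR} identifying $M(X)$ with the multiplier algebra of $C(X)$ and then applies Remark \ref{4.4} (the multiplier algebra of an $n$-minimal $C^*$-algebra is $n$-minimal, via Proposition \ref{1.1}, since it embeds in the bidual). You instead use the universal property of the enveloping von Neumann algebra: the inclusion of $C(X)$ into $M(X)$ extends to a normal $*$-homomorphism $C(X)^{**}\to M(X)$ whose range is $w^*$-closed and contains a $w^*$-dense subalgebra, hence is all of $M(X)$; the kernel is a central reduction, so $M(X)$ is a direct summand of the $n$-minimal algebra $C(X)^{**}$. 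This is more self-contained --- it avoids \cite{KR} entirely and uses only the standard bidual machinery --- while the paper's route is shorter on the page because the multiplier-algebra identification is taken off the shelf. Two small remarks: the inclusion $C(X)\subset M(X)$ is $w^*$-dense, not norm-dense as you wrote (in general $C(X)\subsetneq M(X)$, e.g.\ compacts inside $B(H)$); fortunately $w^*$-density is exactly what your surjectivity argument needs, so this is only a slip of terminology. Also, your consolidation of the summands of Proposition \ref{4.3} by the finitely many possible values of $(r_i,k_i)$ to obtain a genuinely \emph{finite} family, which the paper leaves implicit, is a worthwhile addition.
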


\begin{proof} Only $(i) \Rightarrow (iii)$ needs a proof. Suppose
$(i)$. From Proposition \ref{4.1}, we can see $X$ as a sub-TRO of
$C(\Omega,M_n)$, hence by construction $C(X)$ and $D(X)$ are
$n$-minimal $C^*$-algebras. By \cite{KR}, $M(X)$ (resp. $N(X)$) is
the multiplier algebra of $C(X)$ (resp. $D(X)$), so $M(X)$ and
$N(X)$ are $n$-minimal $W^*$-algebras (by Remark \ref{4.4}). The
result follows from the previous proposition.
\end{proof}

Finally, we can generalize $(ii) \Leftrightarrow (iv)
\Leftrightarrow(v)$ of \cite[Proposition 8.6.5]{BLM1} on minimal
TROs to the $n$-minimal case.

\begin{theorem}\label{4.6} Let $X$ be a TRO, the following are equivalent :
\begin{enumerate}[(i)]
\item $X$ is $n$-minimal. \item $X^{**}$ is an injective
$n$-minimal operator space (see Corollary \ref{3.7}). \item $C(X)$
and $D(X)$ are $n$-minimal $C^*$-algebras.
\end{enumerate}
\end{theorem}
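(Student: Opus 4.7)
The plan is to reduce everything to Propositions~\ref{4.3} and~\ref{4.1} via the bidual. A uniform piece of input is that the canonical embedding $X \hookrightarrow X^{**}$ is a completely isometric triple morphism, so $n$-minimality passes freely between $X$ and weak-$*$ dense subspaces of $X^{**}$; in particular (ii)$\Rightarrow$(i) is immediate.

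For (i)$\Rightarrow$(ii), Proposition~\ref{1.1} makes $X^{**}$ an $n$-minimal operator space, and $X^{**}$ is canonically a $W^*$-TRO. Corollary~\ref{4.5} then yields the decomposition $X^{**}=\oplus^{\infty}_{i \in I} L^\infty(\Omega_i,M_{r_i,k_i})$ with $I$ finite and $r_i,k_i \leq n$, which is an injective operator space by Corollary~\ref{3.7}.

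For (i)$\Rightarrow$(iii), Proposition~\ref{4.1} provides a completely isometric triple morphism $\pi:X \to C(\Omega,M_n)$. The images of $C(X)$ and $D(X)$ under $\pi$ sit as $C^*$-subalgebras of $C(\Omega,M_n)$, hence inherit $n$-minimality as operator spaces (and so, by Theorem~\ref{2.2}, as $C^*$-algebras).

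The main step is (iii)$\Rightarrow$(i), which I would route through Proposition~\ref{4.3}. The crucial identification is $M(X^{**})=C(X)^{**}$ and $N(X^{**})=D(X)^{**}$: starting from the linking $C^*$-algebra $A(X)$ and decomposing its bidual $A(X)^{**}$ via the two $2\times 2$ matrix projections $p,\,1-p$ inherited from $M(A(X))$, one sees that $pA(X)^{**}p=(pA(X)p)^{**}=C(X)^{**}$ and that this corner is exactly the weak-$*$ closure of $\{xy^*:x,y\in X\}$ inside $A(X)^{**}$, i.e.\ the algebra $M(X^{**})$ associated with the $W^*$-TRO $X^{**}=pA(X)^{**}(1-p)$; the analog for $D(X)$ and $N(X^{**})$ is identical. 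Granting this, Proposition~\ref{1.1} upgrades the $n$-minimality of $C(X)$ and $D(X)$ to $n$-minimality of the $W^*$-algebras $M(X^{**})$ and $N(X^{**})$; Proposition~\ref{4.3} then gives that $X^{**}$ is $n$-minimal, and the complete isometry $X\hookrightarrow X^{**}$ transfers this to $X$. The only genuine obstacle is the linking-algebra bidual identification; it is standard (see e.g.\ \cite[Section 8.5]{BLM1}) but is the one point that deserves to be written out, because without it one cannot invoke Proposition~\ref{4.3} on $X^{**}$.
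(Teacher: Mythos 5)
Your proposal is correct and follows essentially the same route as the paper: the substantive implication $(iii)\Rightarrow(i)$ is handled in both cases by identifying $M(X^{**})$ and $N(X^{**})$ with $C(X)^{**}$ and $D(X)^{**}$, using Proposition~\ref{1.1} to get $n$-minimality of these biduals, and then applying Proposition~\ref{4.3} to the $W^*$-TRO $X^{**}$. The only differences are cosmetic: you justify $M(X^{**})=C(X)^{**}$ by the corner decomposition of $A(X)^{**}$ where the paper instead cites \cite{KR} and \cite{R} (identifying both with the multiplier algebra of $C(X^{**})$), and your separate $(i)\Rightarrow(ii)$ step via Corollary~\ref{4.5} is redundant because the cycle $(i)\Rightarrow(iii)\Rightarrow(ii)\Rightarrow(i)$ already closes once Proposition~\ref{4.3} delivers the injective form of $X^{**}$.
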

\begin{proof} $(ii) \Rightarrow (i)$ and $(i) \Rightarrow (iii)$ are
obvious. Suppose $(iii)$. From \cite[Proposition 2.4]{KR}, we know
that the multiplier algebra of $C(X^{**})$ is $C(X)^{**}$ and this
$C^*$-algebra is $n$-minimal by our assumption on $C(X)$ and
Remark \ref{4.4}. Moreover by \cite{R}, $M(X^{**})$ is also the
multiplier algebra of $C(X^{**})$, so $M(X^{**})$ is $n$-minimal
too. The same argument works for $N(X^{**})$ and we can apply
Proposition \ref{4.3} to $X^{**}$.
\end{proof}
\section{An $n$-minimal version of the CES-theorem}
To prove the `$n$-minimal' version the CES-Theorem we need the
notion of \textit{left multiplier algebra} of an operator space
$X$. A left multiplier of an operator space $X$ is a map $u:X \to
X$ such that there exist a $C^*$-algebra $A$ containing $X$ via a
complete isometry $i$ and $a \in A$ satisfying $i(u(x))=ai(x)$ for
any $x \in X$. Let $\mathcal{M}_l(X)$ denote the set of left
multipliers of $X$. And \textit{the multiplier norm of $u$} is the
infimum of $\Vert a \Vert$ over all possible $A,i,a$ as above. In
fact Blecher-Paulsen proved that any left multiplier can be
represented in the embedding of $X$ into the $C^*$-algebra
(discussed in section 3)  $$I(S(X))=\left(
\begin{array}{cc}
 I_{11}(X) & I(X) \\
 I(X)^\star & I_{22}(X)
 \end{array} \right) $$ More
precisely, for any left multiplier $u$ of norm no greater than 1,
there exists a unique $a \in I_{11}(X)$ of norm no greater than 1
such that $u(x)=ax$ for any $x \in X$ (see \cite[Theorem
4.5.2]{BLM1}). This result enables us to consider
$\mathcal{M}_l(X)$ as an operator subalgebra of $I_{11}(X)$ (see
the proof of \cite[Proposition 4.5.5]{BLM1} and \cite[Paragraph
4.5.3]{BLM1} for more details) and
$$\mathcal{M}_l(X)=\{ a \in I_{11}(X),~aX \subset X \}$$ as operator
algebras. The product used in the preceding centered formula is
the one on the $C^*$-algebra $I(S(X))$. And the operator algebra
$\mathcal{M}_l(X)$ is called \textit{the multiplier algebra of }
$X$. We let $\mathcal{A}_l(X)=\Delta(\mathcal{M}_l(X))$ denote the
diagonal (see \cite[Paragraph 2.1.2]{BLM1}) of $\mathcal{M}_l(X)$,
this $C^*$-algebra is called \textit{the left adjointable
multiplier algebra} of $X$ and
$$\mathcal{A}_l(X)=\{ a \in I_{11}(X),~aX \subset X~ \mbox{and} ~a^*X
\subset X \}$$ $*$-isomorphically. In fact, if $X$ happens to be
originally a $C^*$-algebra, $\mathcal{A}_l(X)$ is
just its multiplier algebra, and we recover Remark \ref{4.4}.\\
Symmetrically, \textit{the right multiplier algebra of X} is given
by
$$\mathcal{M}_r(X)=\{ b \in I_{22},~Xb \subset X \}$$ and
its diagonal $\mathcal{A}_r(X)=\{ b \in I_{22},~Xb \subset X
~\mbox{and} ~Xb^* \subset X \}$ is \textit{the right adjointable
multiplier algebra} of $X$.

\begin{lemma}\label{5.1} Let $X$ be an operator space and $I(X)$ its injective
envelope. Then there exists a completely contractive unital
homomorphism $\theta:\mathcal{M}_l(X) \to \mathcal{M}_l(I(X))$
such that $\theta(u)_{\vert X}=u$, for any $u \in
\mathcal{M}_l(X)$. And thus, $\theta _{\vert
\mathcal{A}_l(X)}:\mathcal{A}_l(X) \to \mathcal{A}_l(I(X))$
is a $*$-isomorphism.\\
Moreover, the same results hold for right multipliers.
\end{lemma}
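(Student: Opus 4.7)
The plan is to realise both $\mathcal{M}_l(X)$ and $\mathcal{M}_l(I(X))$ as subalgebras of the common $(1,1)$-corner of the Hamana $C^*$-algebra $I(S(X))$ recalled in Section~3, so that $\theta$ becomes essentially an inclusion. First I would verify that $I(S(X))$ serves as the injective envelope of $S(I(X))$ as well. Since $X\subset I(X)$ is a complete isometry, we have $S(X)\subset S(I(X))\subset I(S(X))$, and injectivity of $I(S(X))$ provides a completely contractive extension $I(S(I(X)))\to I(S(X))$ of the inclusion $S(I(X))\subset I(S(X))$. By essentiality of the injective envelope over $S(I(X))$ this extension is a complete isometry; its image is then an injective operator space containing $S(X)$, so it must equal $I(S(X))$ by the minimality clause of the injective envelope. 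Uniqueness of the $C^*$-structure on the injective envelope then yields a $*$-isomorphism $I(S(I(X)))\simeq I(S(X))$, and in particular $I_{11}(I(X))=I_{11}(X)$ and $I_{22}(I(X))=I_{22}(X)$ as $C^*$-algebras, sharing the $(1,2)$-corner $I(X)$.

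Using the Blecher--Paulsen characterisation recalled in the excerpt, $\mathcal{M}_l(I(X))=\{b\in I_{11}(I(X)):b\cdot I(X)\subset I(X)\}$. The $C^*$-multiplication inside $I(S(X))$ already gives $pI(S(X))p\cdot pI(S(X))q\subset pI(S(X))q$, i.e. $I_{11}(X)\cdot I(X)\subset I(X)$ is automatic, so $\mathcal{M}_l(I(X))=I_{11}(X)$ is the entire injective $C^*$-corner. Since by definition $\mathcal{M}_l(X)=\{a\in I_{11}(X):aX\subset X\}$ is an operator subalgebra of $I_{11}(X)$, I take $\theta$ to be the inclusion
$$\theta:\mathcal{M}_l(X)\hookrightarrow I_{11}(X)=\mathcal{M}_l(I(X)).$$
This is a completely isometric unital homomorphism: both units are the projection $p$, which acts on $I(X)=pI(S(X))q$ as the identity; and for $u\in\mathcal{M}_l(X)$ represented by $a\in I_{11}(X)$, the multiplier $\theta(u)$ is left multiplication by the same $a$ on $I(X)$, so $\theta(u)|_X=u$.

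The diagonal statement then drops out: $\mathcal{A}_l(X)=\mathcal{M}_l(X)\cap\mathcal{M}_l(X)^*$ sits inside $I_{11}(X)$, while $\mathcal{A}_l(I(X))=\Delta(\mathcal{M}_l(I(X)))=\Delta(I_{11}(X))=I_{11}(X)$ (the latter being already a $C^*$-algebra). Thus $\theta|_{\mathcal{A}_l(X)}$ is just the inclusion of $C^*$-subalgebras $\mathcal{A}_l(X)\hookrightarrow I_{11}(X)=\mathcal{A}_l(I(X))$, i.e. an injective $*$-homomorphism. The right-multiplier case is strictly symmetric, obtained by working with the $(2,2)$-corner and with the relation $I_{12}(X)\cdot I_{22}(X)\subset I_{12}(X)$ in place of the one used above. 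The only non-mechanical step in the argument is the first one, the identification $I(S(X))\simeq I(S(I(X)))$ as $C^*$-algebras; without it the equality $\mathcal{M}_l(I(X))=I_{11}(X)$ (which is what makes $\theta$ a bare inclusion rather than requiring any transport) would not even be meaningful.
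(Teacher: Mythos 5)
Your proof is correct and rests on the same mechanism as the paper's: represent $u\in\mathcal{M}_l(X)$ by its element $a\in I_{11}(X)$ via the Blecher--Paulsen theorem, and use the corner multiplication $I_{11}(X)\cdot I(X)\subset I(X)$ inside $I(S(X))$ to let the same $a$ act as a left multiplier of $I(X)$. The one genuine difference is that you first establish $I(S(I(X)))\simeq I(S(X))$ (correctly, by the standard essentiality-plus-minimality argument) so as to identify $\mathcal{M}_l(I(X))$ with all of $I_{11}(X)$ and make $\theta$ a literal inclusion; the paper skips this step because it only needs that left multiplication by $a$ is \emph{some} left multiplier of $I(X)$, which is already witnessed by the embedding $I(X)=pI(S(X))(1-p)\subset I(S(X))$ together with the element $a\in I(S(X))$, and this immediately bounds the multiplier norm of $\theta(u)$ by $\Vert a\Vert$. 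Your extra work buys a strictly stronger conclusion --- $\theta$ is a complete isometry of $\mathcal{M}_l(X)=\{a\in I_{11}(X):aX\subset X\}$ into $\mathcal{M}_l(I(X))=I_{11}(X)$ --- whereas the lemma (and its use in Theorem \ref{5.3}) only requires complete contractivity and injectivity. One point you handle correctly but which is worth making explicit: the final clause of the lemma should be read as ``injective $*$-homomorphism'' (a $*$-isomorphism onto its range), since with your identification $\mathcal{A}_l(I(X))=I_{11}(X)$ surjectivity clearly fails in general.
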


\begin{proof} Let $u \in \mathcal{M}_l(X)$, then $u$ can be
represented by an element $a$ in $\{ a \in I_{11}(X),~aX \subset X
\}$. And using the multiplication inside $I(S(X))$, $aI(X) \subset
I(X)$, so $a$ can be seen as an element of $\mathcal{M}_l(I(X))$
which will be written $\theta(u)$. Therefore, $\theta$ is an
injective unital completely contractive homomorphism. The rest of
the proof follows from \cite[Paragraph 2.1.2]{BLM1}.
\end{proof}

In the next lemma, we use the \textit{$C^*$-envelope} of a unital
operator space, see \cite[Theorem 4.3.1]{BLM1} for details. And we
write $R_n$ (resp. $C_n$) the row (resp. column) Hilbert space of
dimension $n$. If $X$ is an operator space, we let $C_n(X)$ be the
minimal tensor product of $C_n$ and $X$ or equivalently
$$C_n(X)=\Big\{ \left( \begin{array}{cccc}
 x_1 & 0 & \cdots & 0 \\
 \vdots & \vdots & \cdots & \vdots\\
x_n &0 &\cdots &0
 \end{array} \right),~x_i \in X \Big\} ~ \subset
 M_{n}(X).$$ The definition of $R_n(X)$ is similar using a row instead of a column. Adapting the proof of the first example of the third
section of \cite{Z}, we can obtain :

\begin{lemma}\label{5.2} Let $A$ be an injective $C^*$-algebra and $k \in \mathbb{N}^*$. Then
\begin{enumerate}[(1)]
\item $\mathcal{M}_l(R_k(A))=A$ $*$-isomorphically and the action
is given by : $$a \cdot (x_1,\dots,x_k)=(ax_1,\dots,ax_k), \quad
\mbox{for~any}~ a, x_i \in A$$ \item $\mathcal{M}_r(C_k(A))=A$
$*$-isomorphically and the action is given by : $$\left(
\begin{array}{c} x_1 \\
                 \vdots\\
                 x_k \end{array} \right) \cdot a = \left(
\begin{array}{c} x_1a \\
                 \vdots\\
                 x_ka \end{array} \right), \quad \mbox{for~any}~ a, x_i \in A$$
\end{enumerate}

\end{lemma}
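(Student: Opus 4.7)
The two statements are symmetric under the exchange rows $\leftrightarrow$ columns and left $\leftrightarrow$ right multipliers, so I focus on (1). First I would set up the natural candidate $\theta_l:A\to\mathcal{M}_l(R_k(A))$ defined by $\theta_l(a)(x_1,\dots,x_k)=(ax_1,\dots,ax_k)$. Since $A$ is injective, so is $M_{k+1}(A)$, and embedding $R_k(A)$ as the corner $pM_{k+1}(A)(1-p)$ with $p=e_{11}$ realizes $\theta_l(a)$ as left multiplication by $a\oplus 0_k$, which preserves this corner. Routine checks confirm $\theta_l$ is a unital $*$-homomorphism taking values in $\mathcal{A}_l(R_k(A))$; it is injective because injective $C^*$-algebras are unital, so $\theta_l(a)(1_A,0,\dots,0)=(a,0,\dots,0)$ vanishes only for $a=0$.

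The substance is surjectivity. Given $u\in\mathcal{M}_l(R_k(A))$, the Blecher--Paulsen theorem \cite[Theorem 4.5.2]{BLM1} produces a unique $\tilde u\in I_{11}(R_k(A))\subseteq I(S(R_k(A)))$ with $u(x)=\tilde u\cdot x$ (Choi--Effros product). I would exploit the injectivity of $M_{k+1}(A)$ to extend the identity on $S(R_k(A))$ to a UCP map $\sigma:I(S(R_k(A)))\to M_{k+1}(A)$; the essential-extension property of the injective envelope then forces $\sigma$ to be a complete isometry. Since $\sigma$ fixes the diagonal projections $p,1-p$, they lie in the multiplicative domain of $\sigma$, so $\sigma$ is a bimodule map preserving the $2\times 2$ block decomposition, and in particular restricts to a UCP complete isometry $\sigma_{11}:I_{11}(R_k(A))\to A$. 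Setting $a:=\sigma_{11}(\tilde u)\in A$, the desired conclusion $u=\theta_l(a)$ reduces---since $\sigma$ fixes $u(x)\in R_k(A)$---to showing $\sigma(\tilde u\cdot x)=\sigma(\tilde u)\,x=ax$ in $M_{k+1}(A)$.

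The main obstacle is precisely this last equality: transferring the abstract Choi--Effros product via $\sigma$ onto the operator subsystem $\sigma(I(S(R_k(A))))\subseteq M_{k+1}(A)$ and showing it agrees with the ambient product on mixed-corner pairs. The plan is to pick a completely contractive projection $\Phi:M_{k+1}(A)\to\sigma(I(S(R_k(A))))$ fixing $S(R_k(A))$ (which exists by injectivity of the image), so that the Choi--Effros product is $\alpha\ast\beta=\Phi(\alpha\beta)$; for $\alpha=a\in A$ (the $(1,1)$-corner) and $\beta=x\in R_k(A)$ (the $(1,2)$-corner), the ambient product $ax$ already lies in $R_k(A)\subseteq S(R_k(A))$, where $\Phi$ is the identity, so $\alpha\ast\beta=ax$ as required. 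This amounts to the natural identification $I(S(R_k(A)))=M_{k+1}(A)$ with $I_{11}(R_k(A))=A$. Part (2) follows by the completely symmetric argument using the corner $(1-p)M_{k+1}(A)p$ and right multiplication.
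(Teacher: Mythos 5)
Your argument is correct, but it reaches the decisive identification $I(S(R_k(A)))\cong M_{k+1}(A)$ (with $I_{11}(R_k(A))=A$) by a genuinely different route than the paper. The paper works through the $C^*$-envelope: it observes that $C^*(\mathcal{S})=M_{k+1}(A)$ for the Paulsen system $\mathcal{S}$ of $R_k(A)$, shows the canonical surjection $\pi\colon C^*(\mathcal{S})\twoheadrightarrow C^*_e(\mathcal{S})$ is injective corner by corner (the off-diagonal corners because $\pi$ restricts to the completely isometric inclusion of $\mathcal{S}$, the diagonal corners via the module relations $\pi_2(ax)=\pi_1(a)\pi_2(x)$), and then uses injectivity of $M_{k+1}(A)$ to conclude $I(\mathcal{S})=C^*_e(\mathcal{S})=M_{k+1}(A)$, after which $\mathcal{M}_l(R_k(A))=\{a\in I_{11}=A:\ aR_k(A)\subseteq R_k(A)\}=A$ is immediate and the product compatibility comes for free. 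You instead go in the opposite direction: you extend the identity of $\mathcal{S}$ to a UCP map $\sigma\colon I(\mathcal{S})\to M_{k+1}(A)$, invoke essentiality to make $\sigma$ completely isometric, use the multiplicative domain of the fixed projections $p,1-p$ to get corner preservation, and reconcile the abstract Choi--Effros product with the ambient product of $M_{k+1}(A)$ via a completely contractive projection $\Phi$ fixing $\mathcal{S}$. This is sound: the crucial equality $\sigma(\tilde u\cdot x)=\Phi(ax)=ax$ holds because $ax$ already lies in $R_k(A)$ where $\Phi$ is the identity, and the intertwining of the two $C^*$-products follows since a unital surjective complete isometry between the two Choi--Effros structures is automatically a $*$-isomorphism --- a point you should state explicitly rather than leave implicit. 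Both proofs use injectivity of $A$ at the same decisive moment (to realize the abstract envelope inside, or as, $M_{k+1}(A)$); the paper's version is a bit shorter because once the $*$-isomorphic identification of $I(\mathcal{S})$ with $M_{k+1}(A)$ is in hand there is nothing left to check about products, whereas your version avoids the universal property of the $C^*$-envelope entirely at the cost of the extra $\Phi$-step.
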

\begin{proof} We only prove $(1)$, the proof of $(2)$ is similar. Since
 $R_n=B(\ell_n^2,\mathbb{C})$, the Paulsen system $\mathcal{S}$ of
 $R_n(A)$ is
$$\mathcal{S}=\Big\{ \left( \begin{array}{cc}
 \alpha 1_A & x \\
 y* & \beta I_n \otimes 1_A
 \end{array} \right),~\alpha, \beta \in \mathbb{C},~x,y \in R_n(A) \Big\} ~ \subset
 M_{n+1}(A).$$
 Clearly the $C^*$-algebra $C^*(\mathcal{S})$ generated by $\mathcal{S}$ (inside $M_{n+1}(A)$)
 coincides with $M_{n+1}(A)$.
 Next we show that the $C^*$-envelope $C^*_e(\mathcal{S})$ of $\mathcal{S}$
 is $M_{n+1}(A)$. By the universal property of $C^*_e(\mathcal{S})$,
 there is a surjective $*$-homomorphism  $\pi: C^*(\mathcal{S})
 \twoheadrightarrow
 C^*_e(\mathcal{S})$ such that the following commutative diagram
 holds
 $$\xymatrix{ C^*(\mathcal{S}) \ar@{->>}[dr]^\pi \\
 \mathcal{S} \ar@{^{(}->}[u] \ar@{^{(}->}[r] &
 C^*_e(\mathcal{S})}$$
We let $$p=\pi(\left( \begin{array}{cc}
 1_A & 0 \\
 0 & 0
 \end{array} \right)) \quad \mbox{and} \quad q=\pi(\left( \begin{array}{cc}
 0 & 0 \\
 0 & I_n \otimes 1_A
 \end{array} \right)).$$ Then $p$ and $q$ are projections of
 $C^*_e(\mathcal{S})$ satisfying $p+q=1$ and $pq=0$. Thus we can decompose
  $C^*_e(\mathcal{S})$ in `2 $\times$ 2' matrix corners. Hence
 $\pi$ is corner preserving and there exist $\pi_1,\pi_2,\pi_3,\pi_4$
 such that for any $a \in A$, $b \in M_n(A)$, $x,y \in R_n(A)$,
  $$\pi(\left( \begin{array}{cc}
 a & x \\
 y* & b
 \end{array} \right))=\left( \begin{array}{cc}
 \pi_1(a) & \pi_2(x) \\
 \pi_3(y)* & \pi_4(b)
 \end{array} \right)  .$$
 The (1,2) corners of $\mathcal{S}$ and of $C^*(\mathcal{S})$
 coincide so $\pi_2$ is injective (because $\pi$ extends to $C^*(\mathcal{S})$ the inclusion $\mathcal{S} \subset
 C^*_e(\mathcal{S})$). Similarly $\pi_3$ is injective. On the other hand,
 for any $a \in A$, $x \in R_n(A)$,
 $$\pi_2(ax)=\pi_1(a)\pi_2(x).$$ Thus choosing `good $x$', it shows that $\pi_1$ is injective too.
 Analogously, using $$\pi_2(xb)=\pi_2(x)\pi_4(b), \quad \mbox{for~any}~ b \in M_n(A),~
  x \in R_n(A),$$ the previous argument works to prove the injectivity of $\pi_4$.\\
 Finally, $\pi$ is injective and so $C^*_e(\mathcal{S})=M_{n+1}(A)$. By assumption on $A$, $M_{n+1}(A)$ is an injective
 $C^*$-algebra. Therefore
 $$I(\mathcal{S})=M_{n+1}(A) \quad \mbox{ $*$-isomorphically}$$ and $$I_{11}(R_n(A))=\left( \begin{array}{cc}
 1_A & 0 \\
 0 & 0
 \end{array} \right)I(\mathcal{S})\left( \begin{array}{cc}
 1_A & 0 \\
 0 & 0
 \end{array} \right)=A.$$
This proves $(1)$.
\end{proof}

\begin{remark} We acknowledge that after the paper was submitted,
D. Blecher pointed out to the author a more general result : let
$X$ be an operator space, then for any $p,q \in \mathbb{N}^*$,
$$\mathcal{M}_l(M_{p,q}(X))=M_p(\mathcal{M}_l(X)).$$ We outline the proof. As in
\cite[Paragraph 4.4.11]{BLM1}, we can define the $C^*$-algebra
$\mathcal{C}(X)=I(X)I(X)^*$. Using \cite[Corollary 4.6.12]{BLM1},
we note that
$$\mathcal{C}(M_{p,q}(X))=M_p(\mathcal{C}(X)).$$ Moreover, from
\cite{BP}, the multiplier algebra of $\mathcal{C}(X)$ coincides
with $I_{11}(X)$ i.e. $$\mathcal{M}(\mathcal{C}(X))=I_{11}(X).$$
Hence, using the two previous facts, we can compute
$$\begin{array}{ccll}
               \mathcal{M}_l(M_{p,q}(X))&=&\{ a \in I_{11}(M_{p,q}(X)),~aM_{p,q}(X) \subset M_{p,q}(X) \}\\
               &= & \{ a \in \mathcal{M}(\mathcal{C}(M_{p,q}(X))),~aM_{p,q}(X) \subset M_{p,q}(X) \}\\
               &= & \{ a \in \mathcal{M}(M_p(\mathcal{C}(X))),~aM_{p,q}(X) \subset M_{p,q}(X) \} \\
               &= & \{ a \in M_p(\mathcal{M}(\mathcal{C}(X))),~a_{ij}X \subset X,~ \forall ~i,j \}\\
               &= & \{ a \in M_p(I_{11}(X)),~a_{ij}X \subset X,~ \forall ~i,j \}\\
               &= & M_p(\mathcal{M}_l(X)).
               \end{array}$$
              \end{remark}

 The next theorem enables to represent completely contractively
 a module action on an $n$-minimal operator space into a $C^*$-algebra of the form $C(\Omega,M_n)$.
 It constitutes the main result of this section and generalizes $(i) \Leftrightarrow (iii)$ of
 \cite[Theorem 2.2]{BLM2}.

\begin{theorem}\label{5.3} Let $A$ be a Banach algebra endowed with an operator space structure (resp. a $C^*$-algebra).
Let $X$ be an $n$-minimal operator space which is also a left
Banach $A$-module. Assume that there is a net $(e_t)_t \subset
Ball(A)$ satisfying $e_t \cdot x \to x$, for any $x \in X$. The
following are equivalent :
\begin{enumerate}[(i)]
\item $X$ is a left h-module over $A$. \item There exists a
compact Hausdorf space $\Omega$, a complete isometry $i:X \to
C(\Omega,M_n)$ and a completely contractive homomorphism (resp.
$*$-homomorphism) $\pi:A \to C(\Omega,M_n)$ such that $$i(a \cdot
x)=\pi(a)i(x), \quad \mbox{for~any}~ a \in A,~ x \in X$$
\end{enumerate}
\end{theorem}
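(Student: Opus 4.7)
The direction $(ii) \Rightarrow (i)$ is routine: multiplication in the $C^*$-algebra $C(\Omega,M_n)$ is completely contractive on the Haagerup tensor product, so the existence of $\pi$ and $i$ makes the action $A \otimes_h X \to X$, $a \otimes x \mapsto a \cdot x$, a complete contraction.

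For $(i) \Rightarrow (ii)$, the plan is to realize the action via left multipliers of $X$ and then exploit the concrete form of the injective envelope given by Theorem \ref{3.5}. First, the $h$-module hypothesis together with the net $(e_t)$ provides a completely contractive homomorphism (respectively a $*$-homomorphism in the $C^*$-algebra case)
$$\theta:A \longrightarrow \mathcal{M}_l(X) \subseteq I_{11}(X)$$
with $\theta(a)x=a \cdot x$ for every $a \in A$ and $x \in X$, the product being computed inside the $C^*$-algebra $I(S(X))$; this is the standard multiplier-algebra reformulation of the non-degenerate CES theorem (see \cite[Chapter 3 and Section 4.6]{BLM1}).

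Next, apply Theorem \ref{3.5} inside $I(S(X))$. Since $X$ is $n$-minimal, the Paulsen system $S(X)$ is $2n$-minimal, and $I(S(X))$ is an injective $2n$-minimal $C^*$-algebra. Following the proof of Theorem \ref{3.5}, we may diagonalize the Paulsen projection $p$ so that
$$I(S(X))=\oplus^\infty_{i \in I}C(\Omega_i,M_{r_i+k_i}),\qquad r_i,k_i \leq n,$$
with $p$ corresponding fiberwise to $\diag(1,\dots,1,0,\dots,0)$ of rank $r_i$. Taking $p \cdot p$ and $p \cdot (1-p)$ corners yields
$$I_{11}(X)=\oplus^\infty_{i \in I}C(\Omega_i,M_{r_i}) \qquad \text{and} \qquad X=\oplus^\infty_{i \in I}C(\Omega_i,M_{r_i,k_i})$$
(the latter completely isometrically, as in Theorem \ref{3.5}). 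Let $\Omega=\sqcup_i \Omega_i$ and embed each $M_{r_i}$ (respectively $M_{r_i,k_i}$) into the upper-left $r_i\times r_i$ block (resp.\ upper-left $r_i \times k_i$ block) of $M_n$; matrix multiplication in $M_n$ respects these corner placements, so we obtain a $*$-homomorphism $\rho:I_{11}(X)\to C(\Omega,M_n)$ and a completely isometric triple morphism $i:X\to C(\Omega,M_n)$ (exactly as in Proposition \ref{4.1}) satisfying $\rho(a)i(x)=i(ax)$ for every $a \in I_{11}(X)$ and $x \in X$.

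Setting $\pi=\rho \circ \theta: A \to C(\Omega,M_n)$ then produces a completely contractive homomorphism (respectively a $*$-homomorphism) with $\pi(a)i(x)=i(a \cdot x)$ for every $a\in A,~x \in X$, as required. The main obstacle is the compatibility step: one must extend the TRO embedding of $X$ into $C(\Omega,M_n)$ furnished by Proposition \ref{4.1} to a matching algebra embedding of $\mathcal{M}_l(X)$ that represents the module action as left matrix multiplication in $C(\Omega, M_n)$. This is exactly what the explicit simultaneous decomposition of all four corners of $I(S(X))$ given by Theorem \ref{3.5} enables, so once that theorem is in hand the remaining work is bookkeeping.
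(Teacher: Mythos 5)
Your argument is correct in substance and reaches the conclusion by a route that differs from the paper's at the technical level, even though the overall architecture (oplication theorem, $n$-minimality of the injective envelope, corner embeddings into $M_n$) is the same. The paper first pushes the action forward to $\mathcal{M}_l(I(X))$ via an extension homomorphism $\mathcal{M}_l(X)\to\mathcal{M}_l(I(X))$ (Lemma \ref{5.1}), and then \emph{computes} $\mathcal{M}_l(I(X))=\oplus_i M_{r_i}(C(\Omega_i))$ by identifying the $\mathrm{C}^*$-envelope of the Paulsen system of $R_k(A)$ for an injective $\mathrm{C}^*$-algebra $A$ (Lemma \ref{5.2}); the concrete left action then drops out of that identification. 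You instead stay inside $I(S(X))$: you decompose the whole $2n$-minimal injective $\mathrm{C}^*$-algebra $I(S(X))$ as $\oplus_i C(\Omega_i,M_{r_i+k_i})$ with the Paulsen projection diagonalized, and read off the $(1,1)$-corner $I_{11}(X)=\oplus_i C(\Omega_i,M_{r_i})$ and the $(1,2)$-corner simultaneously, so that the compatibility $\rho(a)i(x)=i(ax)$ is literally fiberwise block multiplication. This bypasses both supporting lemmas (the content of Lemma \ref{5.1} is absorbed into the trivial fact that $I_{11}(X)$ multiplies $I_{12}(X)$ inside $I(S(X))$), which makes the proof shorter; what the paper's route buys is the identification of $\mathcal{M}_l(R_k(A))$ itself, which is of independent interest. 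One slip to correct: the $p\,\cdot\,(1-p)$ corner of $I(S(X))$ is $I_{12}(X)=I(X)$, not $X$, so you should write $X\subseteq I(X)=\oplus_i C(\Omega_i,M_{r_i,k_i})$ and obtain $i$ by restricting the corner embedding of $I(X)$ to $X$ (and the constraints $r_i,k_i\leq n$ come from the $n$-minimality of $I(X)$, via Proposition \ref{1.1}, rather than of $X$ directly); since $\mathcal{M}_l(X)$ maps $X$ into $X$, the restricted relation $\pi(a)i(x)=i(a\cdot x)$ survives and nothing else in your argument is affected.
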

\begin{proof} Suppose $(i)$. We first treat the Banach algebra case. By Blecher's oplication Theorem (see
\cite[Theorem 4.6.2]{BLM1}), we know that there is a completely
contractive homomorphism $\eta: A \to \mathcal{M}_l(X)$ such that
$\eta(a)(x)=a \cdot x$, for any $a \in A$, $x \in X$. Using
$\theta$ obtained in Lemma \ref{5.1}, we have a completely
contractive homomorphism $\sigma =\theta \circ \eta: A \to
\mathcal{M}_l(I(X))$ satisfying $$\sigma(a)(x)=a \cdot x,\quad
\mbox{for~any}~ a \in A,~ x \in X.$$ Moreover, $I(X)$ is an
injective $n$-minimal operator space, so
$$I(X)=\oplus^{\infty}_{i \in I} C(\Omega_i,M_{r_i,k_i}) \quad \mbox{completely
isometrically}$$ where the $\Omega_i$'s are Stonean and $r_i,k_i
\leq n$, for any $i \in I$. We have the completely isometric
unital isomorphisms
$$\begin{array}{ccll}
               \mathcal{M}_l(I(X))&=&\oplus^{\infty}_i ~\mathcal{M}_l(C(\Omega_i,M_{r_i,k_i}))\\
               &= & \oplus^{\infty}_i
               ~\mathcal{M}_l(C_{r_i} \otimes_{min} R_{k_i} \otimes_{min}
               C(\Omega_i))\\
               &= & \oplus^{\infty}_i
               ~M_{r_i}(\mathcal{M}_l(R_{k_i} \otimes_{min}
               C(\Omega_i)))& \quad \\
               &= & \oplus^{\infty}_i
               ~M_{r_i}(C(\Omega_i))& \quad \mbox{(by ~Lemma~ \ref{5.2})}\\
               \end{array}$$
               and via these last identifications, the action of $\mathcal{M}_l(I(X))$ on $I(X)$
               is the one inherited from the obvious left
               action of $M_{r_i}$ on $M_{r_i,k_i}$. More precisely for any $u=(f_i \otimes y_i )_i
                \in \mathcal{M}_l(I(X))$ and $x=(g_i \otimes x_i )_i
                \in I(X)$, $$u (x)=(f_ig_i \otimes y_ix_i )_i.$$ For each $i$, let
               $\varphi_i:M_{r_i} \to M_n$ (resp. $\phi_i:M_{r_i,k_i} \to
               M_n$) be the embedding of $M_{r_i}$ (resp.
               $M_{r_i,k_i}$) in the `up-left corner' of $M_n$.
               Hence, as in the end of
the proof of Corollary \ref{3.3},  we have now a $*$-isomorphism
$$\begin{array}{ccll}
               \psi ~:& \mathcal{M}_l(I(X))&\to &C(\Omega,M_n)\\
               &(f_i \otimes y_i )_i & \mapsto & \sum_i \tilde{f}_i \otimes
               \varphi_i(y_i)
               \end{array}$$ and a complete isometry $$\begin{array}{ccll}
               j ~:& I(X)&\to &C(\Omega,M_n)\\
               &(g_i \otimes x_i )_i & \mapsto & \sum_i \tilde{g}_i \otimes
               \phi_i(x_i)
               \end{array}$$ which verify $$j(u(x))=\psi(u)j(x) \quad \mbox{for~any}~ u \in \mathcal{M}_l(I(X))
               , ~ x \in I(X)$$ Finally $\Omega$, $i=j_{\vert X}$ and $\pi=\psi \circ \sigma$ satisfy
               the desired relations. If $A$ is a $C^*$-algebra, we conclude using the fact that a contractive homomorphism
               between $C^*$-algebras is necessarily a $*$-homomorphism.
\end{proof}

\begin{remark}\label{5.4} \begin{enumerate}[(1)]
\item From the previous result, a $C^*$-algebra which acts
`suitably' on an $n$-minimal operator space is necessarily an
extension of a subhomogeneous $C^*$-algebra of degree $\leq n$.
\item Suppose that $A$ is unital and its action too (i.e. $1 \cdot
x=x$ for any $x$ in $X$). In the previous result, we cannot expect
to obtain a unital completely contractive homomorphism $\pi$.
Because when $A$ is an operator algebra and $A=X$, the assumption
$(i)$ is verified (see the BRS theorem \cite[Theorem
2.3.2]{BLM1}). Hence this particular case leads back to the Remark
\ref{3.4}.
\end{enumerate}
\end{remark}

The theorem below could be considered as an `$n$-minimal version'
of the CES-theorem (see \cite[Theorem 3.3.1]{BLM1}). It is the
bimodule version of Theorem \ref{5.3} and its proof is
`symmetrically' the same using the two lemmas above.

\begin{theorem}\label{5.5} Let $A$ and $B$ be two Banach algebras endowed with an operator space structure (resp. two $C^*$-algebras).
Let $X$ be an $n$-minimal operator space which is also a Banach
$A$-$B$-bimodule. Assume that there is a net $(e_t)_t \subset
Ball(A)$ (resp. $(f_s)_s \subset Ball(B)$) satisfying $e_t \cdot x
\to x$ (resp. $x \cdot f_s \to x$), for any $ x \in X$. The
following are equivalent :
\begin{enumerate}[(i)]
\item $X$ is an h-bimodule over $A$ and $B$. \item There exists a
compact Hausdorf space $\Omega$, a complete isometry $i:X \to
C(\Omega,M_n)$ and two completely contractive homomorphisms (resp.
$*$-homomorphisms) $\pi:A \to C(\Omega,M_n)$ and $\theta:B \to
C(\Omega,M_n)$ such that
$$i(a \cdot x \cdot b)=\pi(a)i(x)\theta(b), \quad \mbox{for~any}~a \in
A,~ b \in B,~ x \in X.$$
\end{enumerate}
\end{theorem}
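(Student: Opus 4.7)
The plan is to mirror the proof of Theorem \ref{5.3} symmetrically, running the left-multiplier argument for the $A$-action and the right-multiplier argument for the $B$-action in parallel.

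The direction (ii) $\Rightarrow$ (i) is easy: a bimodule action inside the $C^*$-algebra $C(\Omega, M_n)$ implemented by two homomorphisms factors through the multiplication of that $C^*$-algebra, which is completely contractive on $A \otimes_h X$ and $X \otimes_h B$ by the $C^*$-algebra case of the Haagerup property. For (i) $\Rightarrow$ (ii), I apply Blecher's oplication theorem (\cite[Theorem 4.6.2]{BLM1}) twice: the hypothesis yields a completely contractive homomorphism $\eta_l : A \to \mathcal{M}_l(X)$ implementing the left action, and symmetrically a completely contractive (anti-)homomorphism $\eta_r : B \to \mathcal{M}_r(X)$ implementing the right action. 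By Lemma \ref{5.1} (and its right version), these extend to completely contractive homomorphisms $\sigma_l : A \to \mathcal{M}_l(I(X))$ and $\sigma_r : B \to \mathcal{M}_r(I(X))$; since $\mathcal{M}_l(I(X))$ and $\mathcal{M}_r(I(X))$ sit respectively in the diagonal corners $I_{11}(X)$ and $I_{22}(X)$ of $I(S(X))$, their actions on $I(X) = I_{12}(X)$ automatically commute.

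Now exploit the structure of the envelope. By Proposition \ref{1.1}, $I(X)$ is itself $n$-minimal and injective, so Theorem \ref{3.5} yields
$$I(X) = \oplus^{\infty}_{i \in I} C(\Omega_i, M_{r_i, k_i})$$
completely isometrically, with the $\Omega_i$ Stonean and $r_i, k_i \leq n$. Writing $M_{r_i, k_i} = C_{r_i} \otimes_{min} R_{k_i}$ and applying Lemma \ref{5.2} componentwise gives
$$\mathcal{M}_l(I(X)) = \oplus^{\infty}_i M_{r_i}(C(\Omega_i)), \qquad \mathcal{M}_r(I(X)) = \oplus^{\infty}_i M_{k_i}(C(\Omega_i)),$$
with the obvious left and right matrix actions on the $(r_i, k_i)$-block. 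Setting $\Omega = \sqcup_i \Omega_i$ and embedding $M_{r_i}, M_{r_i,k_i}, M_{k_i}$ simultaneously into the top-left corner of $M_n$ (using $r_i, k_i \leq n$), one produces, exactly as at the end of Corollary \ref{3.3}, a complete isometry $j : I(X) \to C(\Omega, M_n)$ together with two unital $*$-homomorphisms $\psi_l, \psi_r$ extending matrix-unit placements, such that
$$j(u \cdot x \cdot v) = \psi_l(u)\, j(x)\, \psi_r(v) \quad \text{for all } u \in \mathcal{M}_l(I(X)),\; v \in \mathcal{M}_r(I(X)),\; x \in I(X).$$
Putting $i = j|_X$, $\pi = \psi_l \circ \sigma_l$ and $\theta = \psi_r \circ \sigma_r$ delivers the desired data. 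The $C^*$-algebra case then follows because a contractive homomorphism between $C^*$-algebras is automatically a $*$-homomorphism.

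The main obstacle I expect is the bookkeeping in the last step: one must choose the embeddings $M_{r_i} \hookrightarrow M_n$, $M_{r_i,k_i} \hookrightarrow M_n$ and $M_{k_i} \hookrightarrow M_n$ with \emph{compatible} index ranges (all anchored at the $(1,1)$-entry, with row indices in $\{1, \ldots, r_i\}$ and column indices in $\{1, \ldots, k_i\}$) so that ordinary matrix multiplication in $M_n$ restricts to the correct left and right actions on the $(r_i, k_i)$-block; this is a routine but careful check. The other subtlety already handled by the Stonean-ness of each $\Omega_i$ is that the zero-extensions $\tilde f_i$ from $\Omega_i$ to $\Omega = \sqcup_i \Omega_i$ remain continuous, so that $j$, $\psi_l$ and $\psi_r$ are all well-defined.
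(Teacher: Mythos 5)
Your proposal is correct and follows exactly the route the paper intends: the paper gives no separate proof of Theorem \ref{5.5}, stating only that it is obtained by running the proof of Theorem \ref{5.3} symmetrically with Lemma \ref{5.1} and Lemma \ref{5.2}, which is precisely what you do (oplication on both sides, extension to $\mathcal{M}_l(I(X))$ and $\mathcal{M}_r(I(X))$, the decomposition of $I(X)$ from Theorem \ref{3.5}, and the corner embeddings into $M_n$). The points you flag as needing care --- the anti-homomorphism convention for right multipliers and the compatibility of the corner embeddings --- are exactly the right ones, and they work out as you describe.
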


The next result states that if $A$ and $B$ are originally
$n$-minimal operator algebras, then $\pi$ and $\theta$ can be
chosen completely isometric. This corollary generalizes
\cite[Corollary 2.10]{BLM2}.

\begin{corollary}\label{5.6} Let $A$, $B$ and $X$ be three $n$-minimal
operator spaces such that $A$ and $B$ are approximately unital
operator algebras and $X$ is a Banach $A$-$B$-bimodule. Assume
that there is a net $(e_t)_t \subset Ball(A)$ (resp. $(f_s)_s
\subset Ball(B)$) satisfying $e_t \cdot x \to x$ (resp. $x \cdot
f_s \to x$), for any $ x \in X$. The following are equivalent :
\begin{enumerate}[(i)]
\item $X$ is a left h-module over $A$. \item There exists a
compact Hausdorf space $\Omega$, a complete isometry $i:X \to
C(\Omega,M_n)$ and completely isometric homomorphisms $\pi:A \to
C(\Omega,M_n)$ and $\theta:B \to C(\Omega,M_n)$ such that $$i(a
\cdot x \cdot b)=\pi(a)i(x)\theta(b), \quad \mbox{for~any}~a \in
A,~ b \in B,~ x \in X.$$
\end{enumerate}
\end{corollary}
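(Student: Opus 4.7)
My plan is to combine Theorem~\ref{5.5} (which produces a $C^*$-representation of the bimodule action, but only with completely contractive $\pi,\theta$) with Corollary~\ref{2.4} (which produces completely isometric homomorphisms from $A$ and $B$ individually into algebras of the form $C(\cdot,M_n)$), glued together via an external disjoint-union construction.

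The implication (ii) $\Rightarrow$ (i) is routine: the $A$-$B$-action on $X$ factors through multiplication inside the $C^*$-algebra $C(\Omega,M_n)$ via $\pi,i,\theta$; since multiplication on a $C^*$-algebra is completely contractive on the Haagerup tensor product and each of $\pi,i,\theta$ is completely contractive, the induced map $A\otimes_h X\otimes_h B\to X$ is completely contractive, so $X$ is an h-bimodule.

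For (i) $\Rightarrow$ (ii), I will first apply Theorem~\ref{5.5} to obtain a compact Hausdorff space $\Omega_0$, a complete isometry $i_0:X\to C(\Omega_0,M_n)$ and completely contractive homomorphisms $\pi_0:A\to C(\Omega_0,M_n)$, $\theta_0:B\to C(\Omega_0,M_n)$ satisfying $i_0(a\cdot x\cdot b)=\pi_0(a)i_0(x)\theta_0(b)$. Since $A$ and $B$ are approximately unital $n$-minimal operator algebras, Corollary~\ref{2.4} supplies compact Hausdorff spaces $\Omega_A,\Omega_B$ and completely isometric homomorphisms $\pi_1:A\to C(\Omega_A,M_n)$ and $\theta_1:B\to C(\Omega_B,M_n)$. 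These will be used as `upgrades' for $\pi_0,\theta_0$.

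Now set $\Omega=\Omega_0\sqcup\Omega_A\sqcup\Omega_B$, identify $C(\Omega,M_n)$ with the $\ell^\infty$-direct sum $C(\Omega_0,M_n)\oplus^\infty C(\Omega_A,M_n)\oplus^\infty C(\Omega_B,M_n)$, and define
$$\pi(a)=(\pi_0(a),\pi_1(a),0),\quad \theta(b)=(\theta_0(b),0,\theta_1(b)),\quad i(x)=(i_0(x),0,0).$$
Then $i$ is a complete isometry (it agrees with $i_0$ on the first coordinate), while $\pi$ and $\theta$ are completely isometric homomorphisms via their $\Omega_A$- and $\Omega_B$-components respectively, and completely contractive overall since $\ell^\infty$-direct sums of complete contractions are complete contractions. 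The bimodule identity is preserved coordinatewise: the second and third coordinates of both $\pi(a)i(x)\theta(b)$ and $i(a\cdot x\cdot b)$ vanish because $i(x)$ is zero there, while on the first coordinate we recover the already-established relation from $\Omega_0$. There is essentially no serious obstacle; the only point to verify is that padding with the `free' copies $\Omega_A,\Omega_B$ on which $i$ vanishes upgrades $\pi_0,\theta_0$ to completely isometric maps without disturbing the intertwining.
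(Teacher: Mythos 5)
Your proposal is correct and follows essentially the same route as the paper's own proof: apply Theorem~\ref{5.5} to represent the bimodule action, apply Corollary~\ref{2.4} to get completely isometric homomorphisms of $A$ and $B$ separately, and pad everything together on the disjoint union of the three compact spaces so that $\pi$ and $\theta$ become completely isometric while the intertwining relation survives coordinatewise. The only (immaterial) difference is the ordering of the direct summands.
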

\begin{proof} From Theorem \ref{5.5}, there exists a compact
Hausdorf space $K_0$, a complete isometry $j:X \to C(K_0,M_n)$ and
completely contractive homomorphisms $\pi_0:A \to C(K_0,M_n)$ and
$\theta_0:B \to C(K_0,M_n)$ satisfying $$j(a \cdot x \cdot
b)=\pi_0(a)i(x)\theta_0(b),$$
for any $a \in A$, $b \in B$, $x \in
X$. Moreover by Corollary \ref{2.4}, there exists a compact
Hausdorf space $K_A$ (resp. $K_B$) and a completely isometric
homomorphism $\pi_A:A \to C(K_A,M_n)$ (resp. $\theta_B:B \to
C(K_B,M_n)$). Let $$C=C(K_A,M_n) \oplus^{\infty} C(K_0,M_n)
\oplus^{\infty} C(K_B,M_n) = C(\Omega,M_n)$$ where $\Omega$ is the
disjoint union of $K_A,K_B$ and $K_0$. Let $i:X \to C(\Omega,M_n)$
defined by $i(x)=0 \oplus j(x) \oplus 0$, for any $x \in X$ so $i$
is a complete isometry. Let $\pi:A \to C(\Omega,M_n)$ (resp.
$\theta:B \to C(\Omega,M_n)$) defined by $\pi(a)= \pi_A(a) \oplus
\pi_0(a) \oplus 0$, for any $a \in A$ (resp. $\theta(b)= 0 \oplus
\theta_0(b) \oplus \theta_B(b)$, for any $b \in B$ ). Hence, $\pi$
and $\theta$ are completely isometric homomorphisms. Finally,
$\Omega$, $\pi$, $\theta$ and $i$ satisfy the desired relation.
\end{proof}

\end{document}